\newtheorem{theorem}{Theorem}[section]
\newtheorem{lemma}[theorem]{Lemma}
\newtheorem{thm}{Theorem}[section]
\newtheorem{definition}[thm]{Definition}
\title{DP-$4$-colorability of two classes of planar graphs}
\author{Lily Chen$^{1}$ \and Runrun Liu$^{2}$ \and  Gexin Yu$^{2,3}$ \and Ren Zhao$^{1}$ \and Xiangqian Zhou$^{1,4}$}
\address{
$^{1}$\small School of Mathematical Sciences, Huaqiao University, China.\\
$^{2}$ School of Mathematics $\&$ Statistics, Central China Normal University, Wuhan 430079, China. \\
$^3$\small Department of Mathematics, The College of William and Mary, Williamsburg, VA, 23185, USA.\\
$^4$\small Department of Mathematics and Statistics, Wright State University, Dayton, Ohio, 45435
}
\thanks{The first author is supported by NSFC (No.11501223 and No.11701195), Science Foundation of the Fujian Province (No.2016J05009) and Scientific Research Funds of Huaqiao University (No.14BS311). The third author is supported in part by the NSA grant: H98230-16-1-0316 and NSFC grant (11728102). The fifth author is partially supported by the Minjiang Scholar Program hosted by Huaqiao University, Quanzhou, Fujian, China. }
\email{827261672@qq.com (Liu), gyu@wm.edu, xiangqian.zhou@wright.edu}
\begin{document}

\maketitle

\begin{abstract}
DP-coloring (also known as correspondence coloring) is a generalization of list coloring introduced recently by Dvo\v{r}\'ak and Postle (2017). In this paper, we prove that every planar graph $G$ without $4$-cycles adjacent to $k$-cycles  is DP-$4$-colorable for $k=5$ and $6$. As a consequence, we obtain two new classes of $4$-choosable planar graphs.  We use identification of verticec in the proof, and actually prove stronger statements that every pre-coloring of some short cycles can be extended to the whole graph.
\end{abstract}

\section{Introduction}

Graph coloring is one of the most important research topics in graph theory.  Let $[k]$ denote the set $\{i \in Z |1 \leq i  \leq k\}$. A {\em proper $k$-coloring} of a graph $G$ is a function $c:V(G) \to [k]$ such that $c(u) \neq c(v)$ for every edge $uv \in E(G)$. A  graph $G$ is called {\em $k$-colorable} if it has a proper $k$-coloring. The minimum value of $k$ such that $G$ is $k$-colorable is called the {\em chromatic number} of $G$, denoted by $\chi(G)$.

A well-known generalization of proper $k$-coloring is the concept of list coloring, introduced by Vizing \cite{VIZING}, and independently by Erd\H{o}s, Rubin, and Taylor \cite{ERT}. A {\em list assignment} $L$ assigns each vertex $v$ a set of available colors $L(v)$. A graph $G$ is $L$-colorable if $G$ has a proper coloring $c$ such that $c(v) \in L(v)$ for every $v \in V(G)$. A graph $G$ is called {\em $k$-choosable} if $G$ is $L$-colorable for every $L$ with $|L(v)| \geq k$ and the minimum integer $k$ for which $G$ is $k$-choosable is called the {\em list-chromatic number} of $G$, denoted by $\chi_l (G)$.

Since a proper $k$-coloring corresponds to an $L$-coloring with $L(v)=[k]$ for every $v \in V(G)$, we have that $\chi(G) \leq \chi_{l} (G)$. It is well-known that there exist graphs $G$ satisfying $\chi(G) < \chi_{l} (G)$. For example, the famous $4$-Color Theorem states that every planar graph is $4$-colorable; while Voigt \cite{VOIGT} found a planar graph that is not $4$-choosable. An interesting problem in graph coloring is to find sufficient conditions for a planar graph to be $4$-choosable. The next result is a good example in that direction; the cases $k=3$ and $k=6$ was due to Fijavz et al \cite{FJMS}, the case $k=4$ was done by Lam, Xu, and Liu \cite{LXL}, and the case $k=5$ was by Wang and Lih \cite{WL}.

\begin{theorem} \label{thm:no-3456-list-colorable }

Let $k$ be an integer with $3 \leq k \leq 6$. If $G$ is a planar graph without a cycle of length $k$, then $G$ is $4$-choosable.
\end{theorem}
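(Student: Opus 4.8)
The plan is to treat all four values of $k$ by one scheme: take a vertex-minimal counterexample and reach a contradiction through discharging. Fix $k\in\{3,4,5,6\}$ and suppose $G$ is a planar graph containing no cycle of length $k$, equipped with a list assignment $L$ with $|L(v)|\ge 4$ for every $v$, such that $G$ is not $L$-colorable while every proper subgraph of $G$ is; among all such pairs $(G,L)$ pick one minimizing $|V(G)|+|E(G)|$, and fix a plane embedding.

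First I would extract the structural consequences forced by minimality. Every vertex has degree at least $4$, since a vertex $v$ with $d(v)\le 3$ could be deleted, the remaining graph $L$-colored by minimality, and $v$ then given a color from its list of at least $4>3$ colors; likewise $G$ is $2$-connected, so each face is bounded by a cycle, and no short cycle separates the plane, as one could $L$-color the two sides and paste the colorings together along the cycle after a small recoloring. This already settles the case $k=3$: a triangle-free plane graph on $n\ge 3$ vertices has at most $2n-4$ edges, hence average degree below $4$ and a vertex of degree at most $3$ --- a contradiction. So assume henceforth $k\in\{4,5,6\}$.

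For the discharging, assign charge $d(v)-4$ to each vertex $v$ and $|f|-4$ to each face $f$; by Euler's formula the total is $-8$. As $\delta(G)\ge 4$ every vertex has nonnegative charge, so the entire deficit sits on triangular faces. The ban on $k$-cycles tightly constrains how triangles and other short faces may cluster --- for example, when $k=4$ any two triangles meet in at most one vertex and a triangle shares no edge with another $3$-face; when $k=5$ a vertex lies on at most two consecutive triangles; and analogously for $k=6$ --- so I would first compile a short list of reducible configurations (local neighborhoods of a triangle or of a degree-$4$ vertex that admit a direct list-coloring extension, often after identifying two vertices of a short cycle or precoloring it) and show none occurs in $G$. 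Then I would fix discharging rules sending charge from vertices of degree $\ge 5$ and faces of size $\ge 5$ to the triangles that need it, and verify that after redistribution every vertex and face has charge $\ge 0$; the reducible-configuration list is exactly what guarantees that no triangle is left starved and no degree-$4$ vertex is over-demanded. Nonnegativity everywhere contradicts the total $-8$.

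The main obstacle is the configuration-and-rules analysis for $k=4,5,6$ separately, since the forbidden-cycle hypothesis meshes with the local planar structure quite differently in the three cases: a $4$-face is permitted for $k=5,6$ but not for $k=4$, and attaching an edge to a triangle yields an outer cycle whose length (hence legality) depends on $k$, so the reducible list and the discharging weights must be re-derived and fitted together in each case. Once a correct pairing of reducible configurations with discharging rules is found the remaining verification is routine bookkeeping; locating that pairing --- notably for $k=6$, where $6$-cycles are long enough that immediate structural control is weakest --- is where the real effort lies.
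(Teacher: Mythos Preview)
The paper does not prove Theorem~\ref{thm:no-3456-list-colorable } at all: it is stated as background and attributed to Fijav\v{z}, Juvan, Mohar and \v{S}krekovski ($k=3,6$), Lam, Xu and Liu ($k=4$), and Wang and Lih ($k=5$). So there is no ``paper's proof'' to compare against; the relevant question is whether your proposal stands on its own.

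As written it does not. What you have is a correct one-line argument for $k=3$ together with a declaration of intent for $k\in\{4,5,6\}$: you announce that you will compile a list of reducible configurations and a matching set of discharging rules, and you explicitly say that finding this pairing ``is where the real effort lies.'' None of that effort is present. A discharging proof is only a proof once the rules and reducible configurations are actually specified and checked, and each of the three remaining cases is a nontrivial paper in its own right; a reviewer cannot credit the outline as a proof of the theorem.

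A couple of the structural claims you make along the way are also not justified in the list-coloring setting. The assertion that a minimal counterexample is $2$-connected is not automatic: if $v$ is a cut vertex and you $L$-color the two sides separately, the two colorings may disagree at $v$, and you cannot simply reset $L(v)$ without losing the $|L(v)|\ge 4$ hypothesis. Similarly, the claim that ``no short cycle separates the plane, as one could $L$-color the two sides and paste the colorings together along the cycle after a small recoloring'' is exactly the kind of precoloring-extension statement that is \emph{not} free for list coloring --- indeed, proving that a precoloring of a short cycle extends is the heart of the stronger theorems in this very paper, and requires its own minimal-counterexample-plus-discharging argument. If you want to use separating-cycle reductions you must either prove the extension lemma first or avoid them in your reducibility arguments.
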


Recently, Dvo\^{r}\'{a}k and Postle \cite{DP17} introduced $DP$-coloring (also known as correspondence coloring) as a generalization of list coloring.

\begin{definition}
Let $G$ be a simple graph with $n$ vertices and let $L$ be a list assignment for $V(G)$. For each edge $uv$ in $G$, let $M_{uv}$ be a matching between the sets $L(u)$ and $L(v)$ and let $\mathcal{M}_L = \{ M_{uv} : uv \in E(G)\}$, called the \emph{matching assignment}. Let $H_{L}$ be the graph that satisfies the following conditions
\begin{itemize}
\item each $u\in V(G)$ corresponds to a set of vertices $L(u)$ in $H_L$;
\item for all $u \in V(G)$, the set $L(u)$ forms a clique;
\item if $uv \in E(G)$, then the edges between $L(u)$ and $L(v)$ are those of $M_{uv}$; and
\item if $uv \notin E(G)$, then there are no edges between $L(u)$ and $L(v)$.
\end{itemize}
If $H_L$ contains an independent set of size $n$, then $G$ has a {\em $\mathcal{M}_L$-coloring}. The graph $G$ is {\em DP-$k$-colorable} if, for any matching assignment $\mathcal{M}_L$ in which $L(u)\supseteq[k]$ for each $u \in V(G)$, it has a $\mathcal{M}_L$-coloring. The minimum value of $k$ such that $G$ is DP-$k$-colorable is the {\em DP-chromatic number} of $G$, denoted by $\chi_{DP}(G)$.
\end{definition}

As in list coloring, we refer to the elements of $L(v)$ as colors and call the element $i \in L(v)$ chosen in the independent set of an $\mathcal{M}_L$-coloring as the color of $v$.  It is not hard to see that $DP$-coloring generalizes list coloring: one may simply choose the matching $M_{uv}$ to be the set $\{ (u,c_1)(v,c_2) | c_1 \in L(u),  c_2\in L(v), c_1=c_2\}$ for every edge $uv$ of $G$. So we know that $\chi_l (G) \leq \chi _{DP} (G)$. The inequality may be strict: for example, it is known that $\chi_l (C_{2k}) = 2$ while $\chi_{DP} (C_{2k}) = 3$, where $k\geq 2$ is an integer.

The notion of $DP$-coloring was used in Dvo\^{r}\'{a}k and Postle \cite{DP17} to prove that every planar graph without cycles of lengths from 4 to 8 is $DP$-$3$-colorable and therefore 3-choosable, solving a long-standing conjecture of Borodin~\cite{BORODIN}. In this paper, we study $DP$-$4$-colorability of planar graphs. More specifically,  we are interested in finding sufficient conditions for a planar graph to be $DP$-$4$-colorable.  The next result of Kim and Ozeki \cite{KO17} provides an important motivation for our research.

\begin{theorem} \label{thm:no-3456-DP-coloring}

For each $k \in \{3,4,5,6\}$, every planar graph without a cycle of length $k$ is $DP$-$4$-colorable.
\end{theorem}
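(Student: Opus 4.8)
The plan is to argue by minimal counterexample and discharging, following the template of the list-coloring proofs but rechecking each step for compatibility with matching assignments. Let $G$ be a counterexample with $|V(G)|+|E(G)|$ minimum: a planar graph with no cycle of length $k$ for which there is a matching assignment $\mathcal{M}_L$ with $L(u)=[4]$ for all $u$ admitting no $\mathcal{M}_L$-coloring; fix a plane embedding. A vertex $v$ of degree at most $3$ is reducible, since $G-v$ is $\mathcal{M}_L$-colorable by minimality and then $v$ sees at most $3$ forbidden colors among its $4$, so $\delta(G)\ge 4$; one also checks in the usual way that $G$ is $2$-connected, so that every face is bounded by a cycle. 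If $k=3$ we are already done: a simple planar triangle-free graph satisfies $|E(G)|\le 2|V(G)|-4$, contradicting $2|E(G)|=\sum_v d(v)\ge 4|V(G)|$. So assume from now on that $k\in\{4,5,6\}$.

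For $k\in\{4,5,6\}$ I would build a library of reducible configurations by two methods. The first is plain coloring extension: remove a small piece $S$ (a low-degree vertex lying on a triangle, a short path or cycle of degree-$4$ vertices, a near-triangulated patch, and so on), color $G-S$ by minimality, and then color $V(S)$ greedily in a carefully chosen order; because each already-colored neighbor forbids at most one color through its matching, the degeneracy-style counting in the Wang--Lih and Fijav\v{z} et al.\ arguments transfers, the only point to verify being that no Kempe-chain or color-class argument is secretly used. The second method is vertex identification, as announced in the abstract: if $v$ is a degree-$4$ vertex with two non-adjacent neighbors $u_1,u_2$ having no common neighbor other than $v$ and such that $G-v$ contains no $u_1$--$u_2$ path of length $k$, form the graph $G'$ obtained from $G-v$ by identifying $u_1$ and $u_2$ into a new vertex $x$ with $L(x)=L(u_1)$; the DP-specific point is to equip $G'$ with the matching assignment obtained from $\mathcal{M}_L$ by replacing, for each former neighbor $y$ of $u_2$, the matching $M_{u_2y}$ by its composition with the matching $M^{\ast}$ on $L(u_1)\times L(u_2)$ that pairs $c_1\in L(u_1)$ with $c_2\in L(u_2)$ exactly when some color of $L(v)$ is matched to $c_1$ by $M_{vu_1}$ and to $c_2$ by $M_{vu_2}$. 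Then $G'$ is simple, planar and $k$-cycle-free, so by minimality it has an $\mathcal{M}'$-coloring; pulling this back gives a coloring of $G-v$ in which $u_1$ and $u_2$ block the \emph{same} color at $v$, leaving $v$ at most $3$ forbidden colors, a contradiction.

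The endgame is a discharging argument. Give each vertex charge $d(v)-4$ and each face charge $d(f)-4$; Euler's formula gives $\sum_v(d(v)-4)+\sum_f(d(f)-4)=-8$. Since $\delta(G)\ge 4$ all vertex charges are nonnegative, and since $G$ has no $k$-cycle it has no $k$-face, so the only negative objects are triangular faces, each of charge $-1$ (the sources of positive charge being vertices of degree $\ge 5$ and faces of size $\ge 5$, with the precise local restrictions on how triangles can cluster depending on whether $k=4$, $5$ or $6$). I would then design rules sending prescribed amounts of charge from large faces across their incident edges and from vertices of degree $\ge 5$ to incident triangles, and verify, using the reducible configurations to exclude the worst local pictures --- a triangle all of whose vertices have degree $4$ surrounded by the smallest possible faces, clusters of degree-$4$ vertices, and the like --- that after discharging every vertex and every face has nonnegative charge, contradicting the total $-8$.

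The step I expect to be the main obstacle is making the identification move legitimate, that is, choosing which two neighbors of a degree-$4$ vertex to merge so that $G'$ is simple and contains no $k$-cycle. A $u_1$--$u_2$ path of length $k-2$ in $G-v$ is harmless, since together with the path $u_1vu_2$ it would already force a $k$-cycle in $G$; but paths of length exactly $k$ are not excluded a priori, and neither is the existence of a further common neighbor, which would create a multi-edge. Ruling these out uniformly forces a delicate interaction with the discharging --- one must already know enough structure around $v$ before the identification can be applied --- and this is precisely where the cases $k=4,5,6$ genuinely diverge and where the bulk of the case analysis will live.
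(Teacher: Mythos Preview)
The paper does not contain a proof of Theorem~\ref{thm:no-3456-DP-coloring}: it is quoted from Kim and Ozeki, used as motivation for the paper's own Theorems~\ref{main1} and~\ref{main2}, and invoked only at the very end of those proofs (its $k=3$ case, to dispose of triangle-free graphs). So there is nothing in the paper to compare your proposal against directly.

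For what it is worth, your outline is consonant with the paper's own machinery for its stronger results. Your $k=3$ argument is complete and correct. For $k\in\{4,5,6\}$ you have written a plan rather than a proof, and you diagnose the difficulty accurately. The identification you set up by composing matchings through $v$ is correct (assuming without loss of generality that the $M_{uv}$ are perfect) and is equivalent to what the paper does in Lemma~\ref{reduce}: rename via Lemma~\ref{straight1} so that $vu_1$ and $vu_2$ are straight, then restrict $\mathcal{M}_L$ to the smaller graph. One difference worth flagging: the paper deletes not just $v$ but also the two neighbours orthogonal to $u_1,u_2$ before identifying, which is what lets it conclude a structural fact (``at most one of $v_2,v_4$ has degree $4$'') that feeds directly into discharging; your version only shows that $v$ itself is colourable, so you would still need to convert the hypothesis ``$G-v$ has no $u_1$--$u_2$ path of length $k$'' into a concrete forbidden configuration usable in the charge count. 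In the paper's arguments that conversion is done by classifying the short separating cycles the merge could create (Lemma~\ref{separating}) rather than by controlling a single path length.
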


We say two cycles in a graph $G$ are  {\em adjacent}  if they share at least one edge.  The next result of Kim and X. Yu \cite{KY17} strengthens Theorem~\ref{thm:no-3456-DP-coloring} in the cases of $k=3$ and $k=4$.

\begin{theorem}\label{thm:no-34-DP-coloring}
If a planar graph $G$ has no $4$-cycles adjacent to $3$-cycles, then $G$  is $DP$-$4$-colorable.
\end{theorem}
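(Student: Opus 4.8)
The plan is to argue by contradiction with the discharging method. Suppose the statement fails and let $G$ be a counterexample minimizing $|V(G)|+|E(G)|$: thus $G$ is planar, has no $4$-cycle adjacent to a $3$-cycle, and there is a matching assignment $\mathcal{M}_L$ with $|L(v)|=4$ for all $v$ admitting no $\mathcal{M}_L$-coloring. Fix a plane embedding of $G$. Standard minimality arguments first give that $G$ is $2$-connected and $\delta(G)\ge 4$ (a vertex of degree $\le 3$ could be deleted, the rest colored by minimality, and then at most $3$ of its $4$ colors are blocked by the matchings on its incident edges). The hypothesis forces extra local structure: if two $3$-faces shared an edge, or a $3$-face shared an edge with a $4$-face, their union would contain a $4$-cycle sharing an edge with a $3$-cycle; hence in $G$ no two triangles are adjacent and no triangle is adjacent to a $4$-face, and $4$-cycles are chordless. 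So around any $3$-face the two ``neighbouring'' faces across its non-shared configuration have length $\ge 5$.

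Next I would assemble the family of reducible configurations. Besides the degree bound, the workhorse is an \emph{identification lemma}: if $u,w$ are two suitably placed vertices (for instance on a common small face, or the two ``ends'' of a short path all of whose internal vertices are low-degree) such that identifying them into a single vertex $u^{*}$, with the obvious combined list and matchings, creates neither a multiple edge that destroys the matching structure nor a $4$-cycle adjacent to a $3$-cycle, then the resulting graph $G'$ is smaller and still satisfies the hypothesis, so by minimality $G'$ has an $\mathcal{M}_{L}'$-coloring; this colouring pulls back to an $\mathcal{M}_L$-colouring of $G$ in which $u$ and $w$ ``receive the same colour'', a contradiction. Using this one rules out a number of light clusters: a $4$-face incident with too many degree-$4$ vertices, a degree-$4$ or degree-$5$ vertex surrounded by too many $3$- and $4$-faces, two $4$-faces linked through a degree-$4$ vertex, a $3$-face whose vertices are all ``poor'' in the precise sense needed below, and so on. This step is the main obstacle: one must (i) choose the pair $u,w$ so that the contraction provably keeps the ``no $4$-cycle adjacent to a $3$-cycle'' property — which is delicate because contraction is exactly the operation that tends to create short cycles — and (ii) check the colour can be lifted, and (iii) produce enough such configurations that the discharging in the next step actually closes.

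For the discharging, assign to each vertex $v$ the charge $\mu(v)=d(v)-4$ and to each face $f$ the charge $\mu(f)=d(f)-4$. By Euler's formula $\sum_{v}\mu(v)+\sum_{f}\mu(f)=-8<0$. Since $\delta(G)\ge 4$ every vertex starts with nonnegative charge, and every face of length $\ge 4$ starts with nonnegative charge; the only sources of deficit are $3$-faces, each with charge $-1$. I would set discharging rules sending charge to $3$-faces: each $3$-face takes $\tfrac13$ from each incident vertex of degree $\ge 5$ (with a larger amount, say $\tfrac12$, from degree-$\ge 6$ vertices and from high-degree faces across a shared vertex), together with rules by which $5^{+}$-faces donate their surplus to incident or nearby $3$-faces. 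The fact that $3$-faces are never adjacent to $3$- or $4$-faces keeps the local picture around each $3$-face simple and, combined with the reducible configurations, guarantees each $3$-face has enough ``rich'' contributors to reach charge $0$.

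Finally I would verify that after discharging every vertex and every face has nonnegative charge: faces of length $\ge 5$ clearly retain nonnegative charge after a bounded number of small donations; a vertex $v$ of degree $d$ sends out at most (roughly) $\lfloor d/2\rfloor\cdot\tfrac12$ after excluding, via the reducible configurations, the worst arrangements of incident $3$-faces, so $\mu(v)=d-4\ge$ what it gives away; and each $3$-face ends at $0$ by the counting just described. Then $0\le\sum(\text{final charges})=\sum(\text{initial charges})=-8$, a contradiction, completing the proof. I expect essentially all the genuine difficulty to sit in the second paragraph — engineering the identification lemma and the exact list of reducible configurations so that the clean discharging of the third and fourth paragraphs goes through.
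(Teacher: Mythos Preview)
The paper does not contain a proof of this statement. Theorem~1.3 is quoted as a prior result of Kim and Yu~\cite{KY17} and serves only as motivation; the paper's own contributions are Theorems~1.4 and~1.5 (the $k=5$ and $k=6$ cases), proved via the stronger precoloring-extension Theorems~3.1 and~4.1. So there is no ``paper's proof'' to compare your proposal against.

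For what it is worth, your outline is in the right genre but is not a proof. Two concrete issues. First, your structural claim is slightly off: you say ``around any $3$-face the two neighbouring faces \ldots\ have length $\ge 5$'', but under the hypothesis there are \emph{three} neighbouring faces (triangles are pairwise non-adjacent here, so a $3$-face is a $T_1$), and each must be a $5^{+}$-face. Second, and more seriously, the entire argument rests on reducible configurations you never actually specify: which pairs $u,w$ get identified, why the identification avoids creating a $4$-cycle adjacent to a triangle, and which ``light clusters'' are thereby excluded. Without those lemmas the discharging verification (``each $3$-face has enough rich contributors'', ``faces of length $\ge 5$ clearly retain nonnegative charge'') is unsupported. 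Compare the paper's treatment of the analogous Theorems~3.1 and~4.1: the identification there (Lemma~2.8) is of two \emph{opposite neighbours} of an internal $4$-vertex, the precoloring-extension framework supplies a short outer cycle $D$ whose existence is essential to the argument, and the discharging rules and their verification occupy several pages of explicit case analysis. Your sketch gestures at this machinery but does not build it.
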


In this paper, we strengthen Theorem~\ref{thm:no-3456-DP-coloring} in the cases of $k\in \{4,5,6\}$. We prove the following two results.

\begin{theorem}\label{main1}
A planar graph without $4$-cycles adjacent to $5$-cycles is DP-$4$-colorable.
\end{theorem}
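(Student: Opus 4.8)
The plan is to argue by contradiction via the discharging method, following the now-standard template for such ``adjacency of cycles'' results (as in Kim--Yu for the $3$-$4$ case). Suppose $G$ is a counterexample to a strengthened statement minimizing $|V(G)|+|E(G)|$; as the abstract indicates, the right statement to induct on is not bare DP-$4$-colorability but something like: for every matching assignment with lists of size $4$, and for every ``short'' cycle $C$ (of length $\le 6$, say, or a prescribed near-triangle configuration), every $\mathcal{M}_L$-coloring of $C$ extends to all of $G$. Having a pre-colored cycle in the hypothesis is what makes the reducible-configuration analysis go through, because when we delete or identify vertices near a separating cycle we need to keep control of what happens on both sides. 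I would fix the plane embedding of $G$ and assume $G$ is $2$-connected (a standard reduction), with the pre-colored cycle taken as the outer face after possibly re-embedding.

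Next I would establish the structural (reducibility) lemmas. The minimal counterexample has minimum degree $\ge 4$ (a vertex of degree $\le 3$ off the pre-colored cycle is trivially extendable, since DP-coloring is degree-bounded just like list coloring), no separating cycle of length $\le 6$ that would split the extension problem into two smaller ones, and — crucially — various local configurations around $4$-, $5$-, and $6$-faces are forbidden. The key new device, absent in list coloring, is vertex identification: given two vertices $u,v$ at distance $2$ with a common neighbor $w$, under a matching assignment one can often identify $u$ and $v$ (after relabeling colors through the matchings $M_{uw}, M_{wv}$) to obtain a smaller graph that still has no $4$-cycle adjacent to a $5$-cycle, whose $\mathcal{M}_L$-coloring pulls back to $G$. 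Managing the ``no new bad adjacency is created'' bookkeeping under identification is where the $4$-versus-$5$-cycle hypothesis gets used repeatedly, and it is essentially the reason the two cases $k=5$ and $k=6$ are proved separately. I expect the list of reducible configurations to be the longest part of the write-up but individually routine.

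Then comes the discharging. Assign each vertex $v$ charge $d(v)-4$ and each face $f$ charge $|f|-4$ (so that by Euler's formula the total is $-8$, i.e.\ negative), with the pre-colored outer face given some adjusted initial charge. Design discharging rules that move charge from big faces and high-degree vertices toward $3$-faces — recall $G$ may have triangles; it is $4$-cycles adjacent to $5$-cycles that are banned — and toward $4$-faces and $5$-faces, using the structural lemmas to guarantee that every vertex and every face ends with nonnegative charge. The contradiction $0 > \sum (\text{charges}) \ge 0$ finishes the proof. The delicate point in the discharging, and the place I'd expect to spend the most effort, is handling $4$-faces and $5$-faces that are adjacent to triangles: since $4$-cycles may not touch $5$-cycles, a $4$-face cannot sit next to a $5$-face, which both constrains and helps the charge flow, and the case analysis for how a $3$-face can share edges with neighboring faces has to be done carefully to be sure the reducible configurations genuinely cover every surviving case.

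\medskip
The main obstacle, then, is not any single step but the interaction between the vertex-identification reductions and the requirement that the smaller graph still lie in the same class: one must check that identifying vertices never creates a $4$-cycle adjacent to a $5$-cycle, and the absence of such creation is exactly what forces the somewhat restrictive list of local configurations one is allowed to reduce — so the structural lemmas and the discharging rules have to be co-designed rather than developed independently.
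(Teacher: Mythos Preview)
Your plan is essentially the paper's: prove the stronger extension statement (pre-colored short cycle on the outer face), derive the standard reducibility lemmas including the vertex-identification one, and finish by discharging with $d(\cdot)-4$ charges. The only substantive correction is that the pre-colored cycle should be allowed length up to $7$, not $6$; the paper needs $7^-$-cycles so that the ``no separating $7^-$-cycle'' lemma is available when checking that identifications do not create forbidden adjacencies, and so that the outer-face charge analysis closes.
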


\begin{theorem}\label{main2}
A planar graph without $4$-cycles adjacent to $6$-cycles is DP-$4$-colorable.
\end{theorem}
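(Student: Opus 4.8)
The plan is to deduce Theorem~\ref{main2} from a stronger precolouring‑extension statement proved by a minimal‑counterexample/discharging argument, following the template of the proof of Theorem~\ref{thm:no-34-DP-coloring}. The statement I would prove is:

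$(\star)$ \emph{Let $G$ be a plane graph with no $4$-cycle adjacent to a $6$-cycle whose outer face is bounded by an induced cycle $C$ with $3\le |C|\le 6$, and let $\mathcal M_L$ be a matching assignment with $|L(v)|\ge 4$ for every $v\in V(G)\setminus V(C)$. Then every $\mathcal M_L$-colouring of $C$ extends to an $\mathcal M_L$-colouring of $G$.}

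To see that $(\star)$ implies Theorem~\ref{main2}, take a counterexample $G$ to the theorem (with some matching assignment $\mathcal M_L$, all lists of size $4$) minimising $|V(G)|$. Any vertex $v$ with $d(v)\le 3$ is reducible — colour $G-v$ and extend, since at most three colours of $L(v)$ are killed by the matchings at $v$ — so $\delta(G)\ge 4$; cut‑vertex and other routine reductions let us assume $G$ is $2$-connected. A plane graph with $\delta\ge 4$ cannot have girth at least $7$ (Euler's formula gives $|E|\le\tfrac{7}{5}(|V|-2)$, which clashes with $2|E|\ge 4|V|$), so $G$ contains an induced cycle $C$ with $|C|\le 6$. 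Fix any $\mathcal M_L$-colouring of $C$ (a cycle has DP‑chromatic number at most $3$). If $C$ bounds a face, make it the outer face and apply $(\star)$ to $G$ directly; otherwise $C$ is separating, and applying $(\star)$ to each of the two plane graphs into which $C$ cuts $G$ (both proper subgraphs, both inheriting the absence of a $4$-cycle adjacent to a $6$-cycle, both having $C$ chordless as outer boundary) and gluing the two colourings along $C$ yields an $\mathcal M_L$-colouring of $G$. Either way we contradict the choice of $G$, so it remains to prove $(\star)$.

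For $(\star)$ let $(G,C,\mathcal M_L,\phi)$ be a counterexample with $|V(G)|$ minimum, and carry out the usual structural reductions. First, $C$ is chordless and no two vertices of $C$ have a common neighbour inside unless $|C|$ is small (a chord or short internal path splits $G$ into two smaller $(\star)$-instances that agree on the shared precoloured part); every vertex of $V(G)\setminus V(C)$ has degree at least $4$; $G$ has no separating cycle of length at most $6$ (colour the part of $G$ outside such a cycle, then its closed interior, each a smaller instance of $(\star)$; cut vertices and $2$-cuts are absorbed into this scheme by treating a precoloured vertex or edge as a degenerate "cycle"). Finally one assembles a list of reducible local configurations clustered around $3$-faces and around $C$, and here the technique highlighted in the abstract enters: several configurations are reduced not by deleting a vertex but by \emph{identifying} two carefully chosen non‑adjacent vertices $u,w$ into one, colouring the smaller graph and pulling back — legitimate because the common colour of the identified vertex is consistent with every matching incident to $u$ or to $w$. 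Each such identification requires checking that the contracted graph still has no $4$-cycle adjacent to a $6$-cycle; pinning down exactly which identifications are safe is the combinatorial heart of the reduction step.

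With the structure in hand, assign $\mathrm{ch}(x)=d(x)-4$ to every vertex and every face $x$, so that $\sum_x\mathrm{ch}(x)=-8$ by Euler's formula. The only negatively charged objects are the $3$-faces (charge $-1$) and the low‑degree vertices of $C$; I would design discharging rules sending charge to each $3$-face from its incident vertices of degree at least $5$ and from adjacent faces of length at least $5$, together with rules routing charge from the interior toward $C$, using the hypothesis that no $4$-cycle is adjacent to a $6$-cycle to constrain the faces surrounding a $6$-face and to exclude bad accumulations of $3$- and $4$-faces. The goal is to show that after discharging every inner face and every vertex off $C$ is nonnegative, while the charge remaining on $C$ together with the outer face (which has charge $|C|-4\le 2$) exceeds $-8$ in total, contradicting $\sum_x\mathrm{ch}(x)=-8$. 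I expect the main obstacle to lie precisely in this final tally, in two places: producing enough reducible configurations that every $3$-face is guaranteed a nearby "rich" vertex or face — genuinely more delicate than in the $5$-cycle‑free and $3$-cycle‑free cases, because $4$-faces carry no charge and can completely surround a $3$-face — and, more subtly, bounding the charge left on the precoloured cycle, where the chordlessness of $C$ and the absence of short connections between its vertices must be exploited to keep the degree‑$2$ vertices of $C$ from pulling the total below $-8$.
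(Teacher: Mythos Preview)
Your overall architecture matches the paper's: a precolouring-extension theorem proved by a minimal counterexample and discharging, with vertex identification supplying the crucial reducible configurations, and Theorem~\ref{main2} deduced as a corollary. (The paper's derivation of Theorem~\ref{main2} from the extension theorem is simpler than yours: if $G$ contains a triangle, precolour it and extend; otherwise cite the $k=3$ case of Theorem~\ref{thm:no-3456-DP-coloring}.)

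The genuine gap is your bound $|C|\le 6$ in $(\star)$. In a minimal counterexample to $(\star)$ you will only be able to exclude separating cycles of length at most $6$, and that is not enough to run the identification argument you plan to use. The identification you need (the paper's Lemma~\ref{reduce}(b)) merges the two opposite neighbours $v_1,v_3$ of an internal $4$-vertex $v$ after deleting $\{v,v_2,v_4\}$. A new $6$-cycle through the identified vertex pulls back to a $6$-path from $v_1$ to $v_3$ in $G-\{v,v_2,v_4\}$, which together with $v_1vv_3$ is a separating $8$-cycle of $G$ (with $v_2$ and $v_4$ on opposite sides); with only $|C|\le 6$ you cannot exclude it, and hence you cannot certify that the contracted graph is still free of $4$-cycles adjacent to $6$-cycles. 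Without this reduction the paper's Lemma~\ref{corollary}---which limits how internal $(4,4,4)$-triangles can border a $5$-face---is unavailable, and then a $5$-face can be forced to send out $\tfrac13+\tfrac13+3\cdot\tfrac16=\tfrac76>1$ and go negative in the discharging. The paper's remedy is exactly to strengthen $(\star)$ to allow $|C|\le 8$, restricted to \emph{good} cycles (no interior vertex with four neighbours on $C$); this makes the boundary analysis for $d(D)\in\{7,8\}$ substantially longer, but it is precisely what lets the identification step go through.
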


As a corollary, for each $k\in \{5,6\}$, planar graph without $4$-cycles adjacent to $k$-cycles is $4$-choosable.  Note that this provides two new classes of $4$-choosable planar graphs.

The paper is organized as follows: we introduce some notions and prove some preliminary results in Section ~\ref{sec:prem}; the proofs for Theorem ~\ref{main1} and ~\ref{main2} are presented in Section ~\ref{proof1} and ~\ref{proof2}, respectively.

\section{Preliminaries} \label{sec:prem}

The following are some notions used in the paper. Suppose that $G$ is a planar graph embedded on the plane. Let $V$ be the set of vertices and let $F$ be the set of faces. A $k$-vertex ($k^+$-vertex, $k^-$-vertex, respectively) is a vertex of degree $k$ (at least $k$, at most $k$, respectively). The same notation will be applied to faces and cycles.  An $(\ell_1, \ell_2, \ldots, \ell_k)$-face is a $k$-face $[v_1v_2\ldots v_k]$ with $d(v_i)=\ell_i$ for $1 \leq i \leq k$. Let $C$ be a cycle of $G$. We use $int(C)$ (resp. $ext(C)$) to denote the sets of vertices located inside (resp. outside) the cycle $C$. The cycle $C$ is called {\em separating} if both $int(C)$ and $ext(C)$ are nonempty. Let $\mathcal{M}_L$ be a matching assignment for $G$. Then an edge $uv\in E(G)$ is {\em straight} if every $(u,c_1)(v,c_2)\in E(M_{uv})$ satisfies $c_1=c_2$. The next lemma follows immediately from (\cite{DP17}, Lemma 7).

\begin{lemma}\label{straight1}
Let $G$ be a graph with a matching assignment $\mathcal{M}_L$. Let $H$ be a subgraph of $G$ which is a tree. Then we may rename $L(u)$ for $u\in H$ to obtain a  matching assignment $\mathcal{M'}_L$ for $G$ such that all edges of $H$ are straight in $\mathcal{M'}_L$.
\end{lemma}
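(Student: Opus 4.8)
The plan is to process the vertices of $H$ outward from a fixed root, renaming each list exactly once so as to straighten the single edge joining that vertex to its parent; acyclicity of $H$ is what guarantees there is only one constraint to satisfy at each step.

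First I would fix a root $r$ of $H$ and order the vertices $u_0=r,u_1,u_2,\dots$ of $H$ so that every non-root vertex appears after its parent (for instance a breadth-first order). Recall that renaming $L(u)$ means choosing a bijection $\phi_u$ from $L(u)$ onto a relabeled set $L'(u)$ and then replacing each matching $M_{uv}$ incident to $u$ by $M'_{uv}=\{(u,\phi_u(c_1))(v,\phi_v(c_2)) : (u,c_1)(v,c_2)\in M_{uv}\}$, where $\phi_v$ is understood to be the identity on $L(v)$ whenever $v\notin V(H)$ or $v$ has not yet been renamed. Put $\phi_r=\mathrm{id}$. Then, processing the vertices in the chosen order, suppose $u=u_i$ with $i\ge 1$ and let $p$ be the parent of $u$ in $H$, so that $\phi_p$ has already been defined. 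Since $up\in E(H)\subseteq E(G)$, the matching $M_{up}$ pairs certain colors $c\in L(u)$ with certain colors $c'\in L(p)$; for every such matched pair set $\phi_u(c):=\phi_p(c')$, and extend $\phi_u$ in any way to a bijection from $L(u)$ onto a suitable relabeled set $L'(u)$. This is well defined precisely because $H$ is a tree: $u$ has a unique parent, so $\phi_u$ is constrained on each matched color by exactly one equation. By construction every edge of $M'_{up}$ now joins $(u,\phi_p(c'))$ to $(p,\phi_p(c'))$, i.e.\ $up$ is straight in $\mathcal{M'}_L$.

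Finally I would check that the renamings do not interfere. Renaming $L(u)$ alters only the matchings incident to $u$ in $G$; among the edges of $H$ these are the edge $up$ to the parent, which has just been made straight, and the edges from $u$ to its children in $H$, which will be straightened later when those children are processed — and at that time $\phi_u$ is already fixed and is never changed again. Hence after all vertices of $H$ have been processed, every edge of $H$ is straight in the resulting matching assignment $\mathcal{M'}_L$, while on edges of $G$ outside $H$ the relabeled sets $M'_{uv}$ are still matchings since applying bijections on each side preserves that property. The only place the argument could fail is the well-definedness of the maps $\phi_u$, which is exactly where the tree structure of $H$ is used; this is the heart of the matter and it is immediate, so the rest is bookkeeping.
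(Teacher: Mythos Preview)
Your argument is correct and is precisely the standard proof: root the tree, process vertices outward, and at each step permute the list of the current vertex so as to straighten the unique edge to its parent, which is the only constraint because $H$ is acyclic. The paper itself does not supply a proof but simply cites \cite{DP17}, Lemma~7, and the argument there is exactly this rooted-tree renaming.
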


Throughout the rest of the paper, we will use $\mathcal{G}_1$ to denote the class of planar graphs with no $4$-cycles adjacent to $5$-cycles; and we will use $\mathcal{G}_2$ to denote the class of planar graphs with no $4$-cycles adjacent to $6$-cycles. We will consider a graph $G$ in $\mathcal{G}_1$ or $\mathcal{G}_2$ as a plane graph; in other words, we assume that $G$ is embeded on the plane. Just like for cycles, two faces are called {\em adjacent} if they share at least one common edge. We define a {\em $T_i$-subgraph (or $T_i$ for short)} of $G$  to be a subgraph of $G$ constructed by exactly $i$ adjacent $3$-faces.  The proofs for the next two lemmas are straightforward and thus omitted.

\begin{lemma} \label{property1}
Let $G$ be a graph in $\mathcal{G}_1$. Then the following are true:
\begin{itemize}
\item[(a)] A $3$-face cannot be adjacent to a $4$-face;
\item[(b)] $G$ contains no $T_i$-subgraphs for $i \geq 3$;
\item[(c)] If two $3$-faces $f_1$ and $f_2$ are adjacent, then every face adjacent to $f_1$ other than $f_2$ must be a $6^+$-face;
\item[(d)] Every $4^+$-vertex $v$ is incident to at most $(d_G(v)-2)$ $3$-faces.
\end{itemize}
\end{lemma}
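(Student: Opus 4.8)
The plan is to read every item off the single defining property of $\mathcal{G}_1$: no $4$-cycle of $G$ shares an edge with a $5$-cycle of $G$. For each forbidden configuration I will simply exhibit a $4$-cycle and a $5$-cycle that share a common edge. One caveat up front: these statements are meant for the plane graphs to which the lemma is applied, i.e.\ minimal counterexamples to Theorems~\ref{main1} and~\ref{main2}, which are $2$-connected, have minimum degree at least $3$, and have no small separating cycles; for such $G$ every face of length at most $5$ is bounded by a cycle of that length and the cycles displayed below are nondegenerate (a few tiny graphs such as $K_4$ satisfy the hypothesis but not the conclusion, and are harmless in the application).

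I would establish (a) and (c) first, since they are the basic facts about how a $3$-face meets its neighbours. For (a): if a $3$-face $[xyz]$ and a $4$-face $Q=[xyuv]$ share the edge $xy$, replace $xy$ on the boundary of $Q$ by the path $xzy$; this produces a $5$-cycle $xzyuv$ which shares the edge $uv$ with the $4$-cycle $Q$, a contradiction. For (c): if $3$-faces $f_1=[xyz]$ and $f_2=[xyw]$ share $xy$, then $f_1\cup f_2$ is bounded by the $4$-cycle $D=xzyw$; a face $g\neq f_2$ adjacent to $f_1$ meets $f_1$ along $xz$ or $zy$, an edge of $D$, so $D$ is adjacent to $g$. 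If $g$ is a $5$-face this already contradicts $G\in\mathcal{G}_1$; if $g$ is a $4$-face it contradicts (a); and if $g=[xza]$ is a $3$-face, then $f_1\cup g$ is bounded by the $4$-cycle $xyza$ while $f_1\cup f_2\cup g$ is bounded by the $5$-cycle $xwyza$, and these two share the edge $yz$ — contradiction. Hence $g$ must be a $6^+$-face.

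Parts (b) and (d) then follow formally. For (b): a $T_i$ with $i\geq 3$ contains three triangular faces $f_1,f_2,f_3$ with $f_1$ adjacent to $f_2$ and $f_2$ adjacent to $f_3$ and $f_1\neq f_3$; applying (c) to the adjacent pair $f_2,f_3$ forces $f_1$, a face adjacent to $f_2$ other than $f_3$, to be a $6^+$-face, which it is not. For (d): suppose a vertex $v$ of degree $d\geq 4$ is incident to at least $d-1$ triangular faces. Among the $d$ faces around $v$ at most one is not a $3$-face, so there is a cyclic run of at least $d-1\geq 3$ consecutive $3$-faces at $v$; consecutive faces of this run share an edge at $v$, so the run is a $T_{d-1}$, which contains a $T_3$ and contradicts (b). Hence $v$ is incident to at most $d-2$ triangular faces.

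There is no substantial obstacle here — the whole argument is bookkeeping around faces and short cycles. The only point deserving attention is the degenerate one flagged above, namely confirming that the faces involved are bounded by cycles and that the listed vertices of each exhibited $4$- or $5$-cycle are pairwise distinct; under the standing reduction to a minimal counterexample this is automatic.
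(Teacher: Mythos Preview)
The paper omits the proof of this lemma as ``straightforward,'' and your argument supplies exactly the direct verification the authors had in mind: for each part you exhibit a $4$-cycle sharing an edge with a $5$-cycle, and you derive (b) and (d) cleanly from (c). Your caveat about degenerate cases (e.g., $K_4$, or $K_4$ minus an edge) is well taken---the lemma as literally stated requires the displayed walks to be genuine cycles with distinct vertices, which holds in the intended application to a minimal counterexample but not for arbitrary $G\in\mathcal{G}_1$; the paper silently assumes this.
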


\begin{lemma} \label{property2}
Let $G$ be a graph in $\mathcal{G}_2$. Then the following are true:
\begin{enumerate}[(a)]
\item  $G$ contains no $T_i$-subgraphs for $i \geq 5$; moreover, every $T_4$-subgraph of $G$ is isomorphic to the wheel graph $W_4$ with $4$ spokes.
\item For $i \in \{2, 3, 4\}$, every edge in a $T_i$-subgraph is either adjacent to two $3$-faces or   adjacent to a $3$-face and a $7^+$-face;
\item Each $5^+$-vertex $v$ is incident to at most $(d_G(v)-2)$ $3$-faces.
\end{enumerate}
\end{lemma}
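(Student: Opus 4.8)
\medskip
\noindent\emph{Proof plan.} The plan is to exploit that a $T_i$-subgraph is a \emph{fan}: $i$ triangular faces $vu_0u_1,\,vu_1u_2,\dots,vu_{i-1}u_i$ occurring consecutively in the rotation around a common vertex $v$, with $u_i=u_0$ exactly when the fan closes up. Since no edge lies on three faces, a short check shows that for $i\le 4$ the vertices $u_0,\dots,u_i$ are pairwise distinct unless the fan closes, and a closed fan of length $i$ is a wheel with $i$ spokes. Each part is then settled by exhibiting a $4$-cycle and a $6$-cycle that share an edge. For (a): if the fan is open with $i\ge 4$, or closed with $i\ge 5$, then $u_0,u_1,u_2,u_3,u_4$ are five distinct vertices, so $u_0u_1u_2u_3u_4v$ is a $6$-cycle and $vu_0u_1u_2$ a $4$-cycle, and these share the edge $u_0u_1$, contradicting $G\in\mathcal{G}_2$. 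Hence $G$ has no $T_i$-subgraph with $i\ge 5$, and every $T_4$-subgraph arises from a closed fan of length $4$, so it is isomorphic to $W_4$.

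For (b), let $e=xy$ be an edge of a $T_i$-subgraph with $i\in\{2,3,4\}$. If $e$ lies on two of the fan triangles then both faces at $e$ are $3$-faces and we are done; otherwise $e$ lies on exactly one fan triangle $f=xyz$ and a second face $F$, and we suppose toward a contradiction that $|F|=k$ with $4\le k\le 6$. Let $f'$ be a fan triangle adjacent to $f$ (one exists since $i\ge 2$; and since the second face at $e$ is not a fan triangle, $f'$ shares with $f$ an edge other than $e$, namely $yz$ or $zx$), so that $f\cup f'$ contains a $4$-cycle $C$ through $e$. Concatenating the $x$--$y$ path of length $k-1$ along the boundary of $F$ that avoids $e$ with an $x$--$y$ path of length $7-k$ inside $f\cup f'$ -- namely $e$ itself when $k=6$, the path $xzy$ when $k=5$, and $xzwy$ or $xwzy$ when $k=4$ (where $w$ is the fourth vertex of $f'$) -- yields a $6$-cycle sharing at least one edge with $C$, again contradicting $G\in\mathcal{G}_2$. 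So the second face at every edge of the subgraph is a $3$-face or a $7^+$-face.

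For (c), suppose a $5^+$-vertex $v$ is incident to at least $d_G(v)-1$ triangular faces. If all $d_G(v)$ faces at $v$ are triangles they form a closed fan, a $T_{d_G(v)}$-subgraph with $d_G(v)\ge 5$, which (a) forbids; otherwise exactly one face at $v$ is not a triangle and the remaining $d_G(v)-1\ge 4$ triangular faces form an open fan, a $T_j$-subgraph with $j\ge 4$ which, having $j+2\ge 6$ vertices, is not isomorphic to $W_4$ -- again contradicting (a). Hence $v$ is incident to at most $d_G(v)-2$ triangular faces. The step that really needs care is the degenerate case of (b): the $x$--$y$ path along the boundary of $F$ may re-enter the fan (forcing $F$ to share more than one edge with the $T_i$-subgraph), so the $6$-cycle displayed above need not be simple; one then reroutes through the remaining part of the fan to obtain an honest $6$-cycle still meeting one of the fan's $4$-cycles. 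Disposing of these finitely many sub-configurations -- and, when $G$ is not $2$-connected, of non-simple face boundaries -- is routine but is the only laborious portion of the argument.
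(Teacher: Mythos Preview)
The paper omits the proof of this lemma (``straightforward and thus omitted''), so there is no argument to compare against directly; your approach---exhibiting explicit $4$- and $6$-cycles sharing an edge---is the natural one and is essentially what the authors must have in mind.

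One point deserves more care. You open by asserting that ``a $T_i$-subgraph is a fan,'' but the paper defines $T_i$ as a configuration ``constructed by exactly $i$ adjacent $3$-faces,'' which reads most naturally as a connected cluster in the face-adjacency graph rather than a fan a priori. For $i\le 3$ every such cluster is automatically a fan, but for $i=4$ one can also have a zigzag (triangles $vu_0u_1,\,vu_1u_2,\,vu_2u_3,\,u_2u_3w$) or a star (a central triangle with one triangle glued to each side). Both configurations do contain a $6$-cycle meeting a $4$-cycle---e.g.\ for the zigzag take the $6$-cycle $vu_0u_1u_2wu_3$ and the $4$-cycle $vu_1u_2u_3$---so part~(a) still goes through, but these cases should be named rather than absorbed into ``a $T_i$-subgraph is a fan.'' Note also that $K_4$ lies in $\mathcal G_2$, has four pairwise-adjacent triangular faces, and is not $W_4$; this shows the lemma as literally stated requires the additional hypothesis (available for the minimal counterexample via Lemma~\ref{separating}) that $G$ has no separating $3$-cycle. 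That is a wrinkle in the paper's formulation rather than in your argument.

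Your treatment of (b) and (c) is sound at the sketch level, and you correctly isolate the only delicate spot: when the boundary walk of $F$ revisits a fan vertex, the displayed $6$-cycle may fail to be simple. Calling the repair ``routine'' is fair, but in a full write-up those finitely many sub-cases (indexed by $k\in\{4,5,6\}$ and by which fan vertex is hit) should actually be listed and discharged.
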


A cycle $C$ of $G$ is called {\em bad} if there exists a $4^+$-vertex in $V(G) \setminus V(C)$ that has at least four neighbors on $C$. Otherwise $C$ is called {\em good}.  Clearly a $3$-cycle is always good. The next lemma follows easily from the definitions of $\mathcal{G}_1$, $\mathcal{G}_2$, and bad cycles.

\begin{lemma} \label{good-cycle}
If $G$ is in $\mathcal{G}_1$, then every $7^-$-cycle is good; if $G$ is in $\mathcal{G}_2$, then $G$ has a bad $8^-$-cycle $C$ if and only if $G$ contains a subgraph isomorphic to one of the  graphs shown in Figure ~\ref{badcycle} where $C$ is the outer $4$-cycle or $8$-cycle.
\end{lemma}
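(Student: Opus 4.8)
The plan is to argue directly from the definition of a bad cycle together with the structural restrictions imposed by $\mathcal{G}_1$ and $\mathcal{G}_2$. Recall that a cycle $C$ is bad precisely when some $4^+$-vertex $w \notin V(C)$ has at least four neighbors on $C$; such a $w$ together with four of its neighbors on $C$ creates four "fans" that subdivide the disk bounded by $C$, and at least one of these fans (actually their lengths sum to $|C|$ plus a bounded amount) is short. The key observation is that two consecutive neighbors $x,y$ of $w$ on $C$ together with $w$ either form a $3$-cycle $wxy$ (if $xy \in E(C)$) or a $4^+$-cycle through $w$, and the arc of $C$ between them plus $w$ forms a cycle whose length is $2$ plus that arc length. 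For $G \in \mathcal{G}_1$: if $C$ is a $7^-$-cycle and $w$ had four neighbors on it, the four arcs of $C$ between consecutive neighbors have lengths summing to $|C| \le 7$, so by pigeonhole two of them have length $1$, i.e. $w$ lies in two $3$-faces (or a small configuration) that are forced to be adjacent to a $4$-cycle or to produce a $T_3$-subgraph or a $4$-cycle adjacent to a $5$-cycle — all excluded by Lemma \ref{property1}. I would carry out this case analysis by listing the partitions of $|C| \in \{3,4,5,6,7\}$ into four positive parts and checking each yields a forbidden adjacency; the $3$-cycle case is immediate since a $3$-cycle cannot have a vertex off it with three, let alone four, neighbors on it.

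For $G \in \mathcal{G}_2$ the statement is an "if and only if", so I would prove both directions. The easy direction: for each graph $H$ depicted in Figure \ref{badcycle}, one exhibits the central $4^+$-vertex and checks it has four neighbors on the outer $4$- or $8$-cycle, so that cycle is bad; one also checks $H$ itself lies in $\mathcal{G}_2$ (no $4$-cycle adjacent to a $6$-cycle), so these configurations genuinely can occur. The harder direction: assume $G \in \mathcal{G}_2$ has a bad $8^-$-cycle $C$ with witness vertex $w$. If $|C| \le 7$, the same pigeonhole as above forces short arcs and hence forbidden $4$-/$6$-cycle adjacencies, except one must be more careful because $\mathcal{G}_2$ permits some configurations $\mathcal{G}_1$ does not (e.g. $T_4 = W_4$), and these are exactly the ones that survive and appear in the figure. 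If $|C| = 8$, the four arcs have lengths summing to $8$, so the multiset of arc lengths is one of $\{1,1,1,5\},\{1,1,2,4\},\{1,1,3,3\},\{1,2,2,3\},\{2,2,2,2\}$; for each, the arcs of length $1$ give $3$-faces at $w$, arcs of length $2$ give $4$-faces at $w$ (a $w$-$C$-$C$-path), and I would use Lemma \ref{property2}(a)–(b) to rule out all but the patterns that force $C$ and the chords to form exactly one of the figure's graphs — the $\{2,2,2,2\}$ pattern being the natural candidate for the outer-$8$-cycle picture, since it makes $w$ incident to four $4$-faces packed around it.

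The main obstacle I anticipate is the $|C|=8$, $\mathcal{G}_2$ direction: one must show that every bad $8$-cycle is not merely forced to contain some small forbidden-looking fragment, but is in fact globally isomorphic to one of the finitely many graphs in Figure \ref{badcycle}, which requires pinning down the internal chords/paths from $w$ to $C$ and verifying no extra vertices or edges can be present without creating a $4$-cycle adjacent to a $6$-cycle. This is essentially a finite but somewhat delicate enumeration; the cleanest route is to fix $w$ and its four neighbors $v_1,v_2,v_3,v_4$ on $C$ in cyclic order, observe each $w v_i$ together with the two incident arcs and the paths $w v_i, w v_{i+1}$ bounds a region whose boundary is a cycle of length $\le 8$, apply the adjacency constraints region by region, and note that a chord of $C$ or a second internal vertex would split $C$ into cycles whose lengths again violate Lemma \ref{property2}. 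Since the paper says the proof is "straightforward and thus omitted" for the neighboring lemmas, I expect the intended proof here is likewise a routine—if tedious—discharging-free case check, and I would present it as such rather than seeking a slicker argument.
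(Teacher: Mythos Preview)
The paper does not actually supply a proof of this lemma; it states that the result ``follows easily from the definitions of $\mathcal{G}_1$, $\mathcal{G}_2$, and bad cycles.'' Your plan---fix a witness vertex $w$ with four neighbours on $C$, record the cyclic arc-lengths between consecutive neighbours, and check each partition of $|C|$ for a forbidden $4$-cycle/$5$-cycle (resp.\ $4$-cycle/$6$-cycle) adjacency---is exactly the routine verification the paper has in mind, and it works.

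Two points deserve correction, though. First, your guess that the $\{2,2,2,2\}$ pattern is ``the natural candidate'' for the bad $8$-cycle is wrong: with $w$ adjacent to $v_1,v_3,v_5,v_7$ on $C=v_1\cdots v_8$, the $4$-cycle $wv_1v_2v_3$ and the $6$-cycle $wv_1v_2v_3v_4v_5$ share the edge $v_1v_2$, so this pattern is excluded from $\mathcal{G}_2$. Carrying out the enumeration honestly, the surviving $8$-cycle patterns are those in which no two $w$-neighbours lie at distance exactly $4$ on $C$ while a $4$-cycle through $w$ is present---namely the arc-sequences $(1,1,1,5)$, $(1,3,1,3)$, and $(1,2,3,2)$ (together with the $W_4$ picture for $|C|=4$); these are what Figure~\ref{badcycle} depicts. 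Second, your worry that one must forbid ``extra vertices or edges'' is unnecessary: the lemma asserts only that $G$ \emph{contains} one of the pictured graphs as a subgraph with $C$ as its outer cycle, so once you have pinned down $w$, its four spokes, and $C$, you are done---nothing about the rest of $G$ needs to be controlled. With these adjustments your case analysis goes through and matches the paper's intended (omitted) argument.
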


\begin{figure}[H]
\includegraphics[scale=0.45]{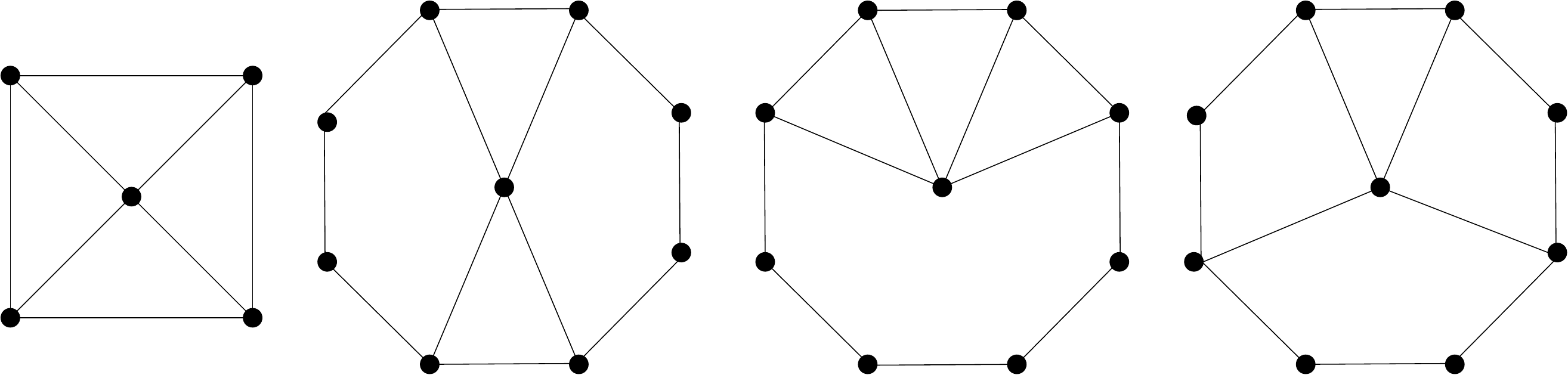}
\caption{ bad $4$-cycle or bad $8$-cycles.}
\label{badcycle}
\end{figure}

To prove our main results, we will choose a $7^-$-cycle (resp. a good $8^-$-cycle) $C_0$ if $G$ is in $\mathcal{G}_1$ (resp. $\mathcal{G}_2$) and assign a pre-DP-$4$-coloring $\phi_0$ on $V(C_0)$. Then we will show that the coloring $\phi_0$ can be extended to  a $DP$-$4$-coloring of $G$ by a discharge procedure on a minimal counterexample.  Assume $(G,C_0)$ is a minimal counterexample; that is, the coloring $\phi_0$ can not be extended to $G$; and $V(G)$ is as small as possible. We now prove some structural results for $(G, C_0)$.

\begin{lemma} \label{c0-nonseparating}
The cycle $C_0$ is not a separating cycle.
\end{lemma}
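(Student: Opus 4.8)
The plan is to argue by contradiction using the minimality of $(G,C_0)$. Suppose $C_0$ is separating, so that both $\mathrm{int}(C_0)$ and $\mathrm{ext}(C_0)$ are nonempty. The idea is to split $G$ along $C_0$ into its inside part and outside part, color each separately using the minimality hypothesis, and glue the two colorings together along $C_0$. Concretely, let $G_1$ be the subgraph of $G$ consisting of $C_0$ together with all vertices and edges inside $C_0$ (so $G_1$ has no vertices in $\mathrm{ext}(C_0)$), and let $G_2$ be the subgraph of $G$ consisting of $C_0$ together with all vertices and edges outside $C_0$. Both $G_1$ and $G_2$ are plane graphs in the same class ($\mathcal{G}_1$ or $\mathcal{G}_2$), since taking subgraphs cannot create new cycles or new adjacencies of cycles. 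Moreover $C_0$ bounds a face in each of $G_1$ and $G_2$ (the removed region becomes a face), so $C_0$ is still a cycle of the required type in each: in $\mathcal{G}_1$, every $7^-$-cycle is good by Lemma~\ref{good-cycle}; in $\mathcal{G}_2$, one must check that $C_0$ remains a good $8^-$-cycle in $G_1$ and $G_2$, which follows because deleting vertices cannot create a $4^+$-vertex off $C_0$ with four neighbors on $C_0$ — any such vertex in $G_i$ would already witness badness in $G$.

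Next, since $|V(G_1)| < |V(G)|$ and $|V(G_2)| < |V(G)|$ (each is missing the nonempty vertex set on the other side of $C_0$), and since $(G,C_0)$ was chosen as a minimal counterexample, neither $(G_1,C_0)$ nor $(G_2,C_0)$ is a counterexample. Here I should be careful about what "minimal counterexample" quantifies over: it is a counterexample to the statement that \emph{every} pre-coloring $\phi_0$ of such a cycle extends. So the correct reading is that for \emph{any} plane graph $H$ in the relevant class with $|V(H)| < |V(G)|$, any pre-coloring of an appropriate cycle in $H$ extends to a DP-$4$-coloring of $H$. Apply this first to $(G_1, C_0)$ with the given pre-coloring $\phi_0$ on $V(C_0)$: we obtain a DP-$4$-coloring $\phi_1$ of $G_1$ (respecting the matching assignment $\mathcal{M}_L$ restricted to $G_1$) extending $\phi_0$. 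Now $\phi_1$ restricted to $V(C_0)$ is still exactly $\phi_0$, so we may apply the minimality hypothesis again to $(G_2, C_0)$ with pre-coloring $\phi_0$ on $V(C_0)$, obtaining a DP-$4$-coloring $\phi_2$ of $G_2$ extending $\phi_0$.

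Finally, define a coloring $\phi$ of $G$ by setting $\phi(v) = \phi_1(v)$ for $v \in V(G_1)$ and $\phi(v) = \phi_2(v)$ for $v \in V(G_2)$. This is well-defined because $V(G_1) \cap V(G_2) = V(C_0)$ and both colorings agree with $\phi_0$ there. Every edge of $G$ lies entirely in $G_1$ or entirely in $G_2$ (an edge cannot have one endpoint strictly inside and one strictly outside $C_0$ without crossing $C_0$, and edges of $C_0$ lie in both), so the matching constraint $(u,\phi(u))(v,\phi(v)) \notin E(\mathcal{M}_L)$ is inherited from $\phi_1$ or $\phi_2$ on each edge. Hence $\phi$ is a DP-$4$-coloring of $G$ extending $\phi_0$, contradicting the assumption that $(G,C_0)$ is a counterexample. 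Therefore $C_0$ is not separating.

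The step I expect to require the most care is verifying that $C_0$ retains its qualifying status (a $7^-$-cycle in $\mathcal{G}_1$, or a \emph{good} $8^-$-cycle in $\mathcal{G}_2$) inside each of $G_1$ and $G_2$, and more subtly that $G_1, G_2$ genuinely lie in $\mathcal{G}_1$ resp. $\mathcal{G}_2$ — this is immediate for subgraphs since forbidden configurations are monotone under edge deletion, but in $\mathcal{G}_2$ the "goodness" of $C_0$ needs the observation that passing to a subgraph can only destroy, never create, a bad-cycle witness. A secondary point worth stating explicitly is the precise induction hypothesis: minimality must be taken over all plane graphs in the class with a prescribed pre-colored qualifying cycle, not merely over pairs $(H, C_0)$ with the same $C_0$, so that the argument can be applied to both $G_1$ and $G_2$ whose vertex sets are proper subsets of $V(G)$.
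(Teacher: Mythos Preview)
Your proof is correct and follows the same approach as the paper: split $G$ along $C_0$, extend $\phi_0$ to each side by minimality, and glue. The paper's own proof is a two-line version of exactly this argument, omitting the verifications (closure of $\mathcal{G}_i$ under subgraphs, preservation of goodness of $C_0$) that you spelled out.
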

\begin{proof}
If $C_0$ is a separating cycle, then by the minimality of $(G,C_0)$, we can extend $\phi_0$ to $int(C_0)$ (resp. $ext(C_0)$).  So we get a $DP$-$4$-coloring of $G$ by combining them together, a contradiction.
\end{proof}

Since $C_0$ is non-separating, we may choose an embedding of $G$ such that $C_0$ is the boundary of the outer face $D$. A vertex $u$ of $G$ is called {\em internal} if $u \notin V(D)$; a face $f$ is called {\em internal} if $|V(f) \cap V(D)|=\emptyset$.

\begin{lemma}\label{minimum}
Every  internal vertex  has degree at least $4$.
\end{lemma}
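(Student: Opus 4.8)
The plan is to argue by contradiction using the minimality of $(G,C_0)$. Suppose some internal vertex $v$ has $d_G(v)\le 3$. Since $v\notin V(D)=V(C_0)$, the vertex $v$ is uncolored by $\phi_0$. The strategy is to delete $v$, extend $\phi_0$ to the smaller graph $G-v$ (noting $G-v$ still lies in the relevant class $\mathcal{G}_1$ or $\mathcal{G}_2$, since deleting a vertex destroys no forbidden adjacency, and $C_0$ is still a $7^-$-cycle, resp. good $8^-$-cycle, in $G-v$ bounding the outer face), and then greedily color $v$ last.

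First I would verify the hypotheses needed to invoke minimality: $G-v$ is a proper subgraph with the same outer cycle $C_0$, so by minimality the pre-coloring $\phi_0$ extends to a $\mathcal{M}_L$-coloring $\phi$ of $G-v$. Next I would count the constraints on $v$: in $H_L$, the set $L(v)$ has size at least $4$, and each neighbor $u$ of $v$ that has already received a color forbids at most one color of $v$, namely the (at most one) vertex of $L(v)$ matched to the chosen color of $u$ in $M_{uv}$. Since $v$ has at most $d_G(v)\le 3$ neighbors in $G$, at most $3$ colors of $L(v)$ are forbidden, leaving at least $4-3=1$ available color. Choosing that color for $v$ extends $\phi$ to a $\mathcal{M}_L$-coloring of all of $G$ that agrees with $\phi_0$ on $C_0$, contradicting the assumption that $(G,C_0)$ is a counterexample.

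The only subtlety worth spelling out is that the deletion of $v$ does not affect the status of $C_0$ as an admissible choice of outer cycle: $C_0$ is internally disjoint from $v$ (as $v$ is internal), so $C_0$ remains a $7^-$-cycle in $G-v$ when $G\in\mathcal{G}_1$, and when $G\in\mathcal{G}_2$ one should note that removing a vertex cannot turn a good cycle bad (a good cycle has no external $4^+$-vertex with four neighbors on it, and deleting a vertex only removes potential such witnesses), so $C_0$ stays good. Likewise neither class is enlarged by taking subgraphs, so $G-v\in\mathcal{G}_1$ (resp. $\mathcal{G}_2$). With this in hand the argument is essentially the textbook ``minimal counterexample has minimum degree bounded below'' reduction, and there is no real obstacle; the main thing to get right is the bookkeeping that an uncolored degree-$\le 3$ vertex in a DP-coloring always has a free color because each neighbor kills at most one color through its matching edge.
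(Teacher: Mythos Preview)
Your proof is correct and follows essentially the same approach as the paper: delete the low-degree internal vertex, extend $\phi_0$ to $G-v$ by minimality, and then greedily color $v$ since at most three of its at least four colors are blocked by the matchings to its neighbors. Your extra care in checking that $G-v$ remains in the relevant class and that $C_0$ remains an admissible outer cycle is fine (the paper simply invokes minimality without spelling this out), but the argument is the same.
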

\begin{proof}
Let $v$ be an internal $3^-$-vertex. By the minimality of $G$, $\phi_0$ can be extended to a $DP$-$4$-coloring $\phi$ of $G-v$. Then since $d_G(v) \leq 3$, we can extend $\phi$ to $G$ by selecting a color $\phi(v)$ for $v$ such that for each neighbor $u$ of $v$, $(u,\phi(u))(v,\phi(v))\notin E(M_{uv})$, a contradiction.
\end{proof}

\begin{lemma} \label{separating}
If $G \in \mathcal{G}_1$, then $G$ contains no separating $7^-$-cycles; and if $G \in \mathcal{G}_2$, then $G$ contains no separating good $8^-$-cycles.
\end{lemma}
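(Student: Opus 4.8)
The plan is to argue by minimality of $(G,C_0)$, exactly in the spirit of Lemma~\ref{c0-nonseparating}. Suppose for contradiction that $G \in \mathcal{G}_1$ contains a separating $7^-$-cycle $C$ (the $\mathcal{G}_2$ case with a separating good $8^-$-cycle $C$ is entirely analogous). Since $C$ is separating, both $int(C)$ and $ext(C)$ are nonempty. I would first arrange, using the embedding of $G$ with $C_0$ bounding the outer face $D$, that $C_0$ lies in the closed region $\mathrm{ext}(C) \cup C$ — this is automatic once we note that $C_0 \subseteq V(D)$ and $C$ separates the plane, so $C_0$ is on one side; relabel ``inside'' and ``outside'' of $C$ so that $C_0$ is on the outside. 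Thus the subgraph $G'$ consisting of $C$ together with everything in $ext(C)$ (i.e. $G$ minus the interior vertices of $C$) contains $C_0$ and is a proper subgraph of $G$ since $int(C)\neq\emptyset$, while $G'' = C \cup int(C)$ is also a proper subgraph since $ext(C)\cup V(C_0)\neq\emptyset$.

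The key point is then a two-stage extension. By Lemma~\ref{good-cycle}, every $7^-$-cycle in a graph in $\mathcal{G}_1$ is good (and in $\mathcal{G}_2$ we have assumed $C$ is a good $8^-$-cycle), so $C$ is a good cycle in the relevant sense; this is what lets us apply the minimality hypothesis not just to $(G, C_0)$ but to smaller pairs $(G', C_0)$ and $(G'', C)$ — the minimal counterexample is chosen over all pairs (plane graph, pre-colored short good cycle), so both $G'$ with its distinguished cycle $C_0$ and $G''$ with its distinguished cycle $C$ are instances to which the (non-)minimality applies. First, since $G'$ is a proper subgraph of $G$ and $C_0$ is still a $7^-$-cycle (resp. good $8^-$-cycle) in $G'$, by minimality the pre-coloring $\phi_0$ on $V(C_0)$ extends to a DP-$4$-coloring $\phi$ of $G'$. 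Restricting $\phi$ to $V(C)$ gives a pre-coloring $\phi_1$ of the good cycle $C$. Second, since $G''$ is a proper subgraph of $G$ and $C$ is a good short cycle bounding its outer face, minimality lets us extend $\phi_1$ to a DP-$4$-coloring of $G''$. Gluing $\phi$ on $G'$ with this extension on $G''$ — they agree on $V(C)$, which is the only overlap — yields a DP-$4$-coloring of all of $G$ extending $\phi_0$, contradicting that $(G,C_0)$ is a counterexample.

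The one technical subtlety, and the step I expect to need the most care, is making sure the notion of ``minimal counterexample'' is set up so that both $(G',C_0)$ and $(G'',C)$ genuinely count as smaller instances: we need the statement being minimized to be ``for every plane graph $H$ and every $7^-$-cycle (resp. good $8^-$-cycle) $C'$ of $H$, every pre-DP-$4$-coloring of $V(C')$ extends to $H$,'' with minimality measured by $|V(H)|$. Under that convention, since $|V(G')| < |V(G)|$ and $|V(G'')| < |V(G)|$, both extensions are available. One should also check the harmless point that the matching assignment $\mathcal{M}_L$ restricted to $G'$ and to $G''$ is still a valid matching assignment with lists containing $[4]$, which is immediate, and that a DP-$4$-coloring of a graph restricts to a DP-$4$-coloring (in fact an $\mathcal{M}_L$-coloring) of any subgraph, which is also immediate since an independent set in $H_L$ restricted to the relevant vertex classes remains independent. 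With those bookkeeping points in place, the argument is just the ``combine the two pieces along the separating cycle'' idea, and no discharging or case analysis on Figure~\ref{badcycle} is needed here.
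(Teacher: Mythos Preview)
Your proposal is correct and follows essentially the same two-stage extension argument as the paper: extend $\phi_0$ from $C_0$ to $G-\mathrm{int}(C)$ by minimality, then extend the resulting coloring of $C$ to $C\cup\mathrm{int}(C)$ by minimality, and glue. The paper's proof is terser and does not spell out the point you flag about minimality being taken over all pairs $(H,C')$, but that is indeed how it is being used.
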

\begin{proof} We will only present the proof for the case $G \in \mathcal{G}_2$; the proof for the case $G \in \mathcal{G}_1$ is similar.

Let $C$ be a separating good $8^-$-cycle in $G$. By the minimality of $G$, any precoloring of $C_0$ can be extended to $G-int(C)$. After that, $C$ is precolored, then again the coloring of $C$ can be extended to $int(C)$. Thus, we get a DP-4-coloring of $G$, a contradiction.
\end{proof}

\begin{lemma} \label{reduce}
Let $v$ be an internal $4$-vertex and  let $N(v)=\{v_i:1\le i\le4\}$ in a cyclic order in the embedding.  Suppose that $N(v)\cap D=\emptyset$.  Then

\begin{enumerate}[(a)]
\item if $G \in \mathcal{G}_1$, then at most one of $v_i$ and $v_{i+2}$ is a $4$-vertex for $i=1,2$.
\item Suppose that $G \in \mathcal{G}_2$. If $G$ contains no bad $4$-cycle $C$  with $\{v_1,v,v_3\} \subset V(C)$ and the graph obtained by identifying $v_1$ and $v_3$ in $G-\{v,v_2,v_4\}$ has no $4$-cycles adjacent to $6$-cycles, then at most one of $v_2$ and $v_4$ is a $4$-vertex.
\end{enumerate}
\end{lemma}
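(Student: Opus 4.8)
The strategy is the standard "identify two opposite neighbors and invoke minimality" argument, adapted to the two classes. For part (a), suppose toward a contradiction that both $v_1$ and $v_3$ are $4$-vertices. Let $G' = G - \{v, v_2, v_4\}$ and form $G''$ by identifying $v_1$ and $v_3$ into a single vertex $v^*$. First I would check that $G'' \in \mathcal{G}_1$: since $v$ is internal with $N(v)\cap D = \emptyset$, and $G \in \mathcal{G}_1$ forbids $4$-cycles adjacent to $5$-cycles, any short cycle through $v^*$ in $G''$ corresponds to a path through $v$ (or through $v_2$ or $v_4$) in $G$, and using Lemma \ref{property1}(a)–(c) together with the fact that $v_1v_3$ cannot already be an edge (else $v_1vv_3$ bounds a $3$-face with the other face adjacent being $6^+$, and one checks this does not create the forbidden adjacency) one rules out creating a $4$-cycle adjacent to a $5$-cycle. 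Crucially the identification is legal only if $v_1$ and $v_3$ have no common neighbor other than $v$; I would use Lemma \ref{good-cycle} (every $7^-$-cycle is good in $\mathcal{G}_1$, so in particular no $4^+$-vertex outside a short cycle has four neighbors on it) and the non-existence of separating $7^-$-cycles (Lemma \ref{separating}) to exclude a common neighbor, since a common neighbor $w \neq v$ would give a $4$-cycle $v_1 v v_3 w$, forcing unwanted structure. Then $C_0$ survives in $G''$ (it lies in the outer face and is disjoint from the deleted vertices, as $N(v)\cap D=\emptyset$ and $v$ is internal), so by minimality $\phi_0$ extends to a DP-$4$-coloring of $G''$; pull this back to $G'$ by giving $v_1$ and $v_3$ the color of $v^*$ via the matched identification, and then color $v_2$, $v_4$ (each of degree $\le 4$ in $G$, but now with at most $3$ colored neighbors each since $v$ is uncolored — wait, $v$ is still uncolored) greedily, and finally $v$: each of $v_2,v_4,v$ has at most $3$ already-colored neighbors at the moment it is colored if we order them $v_2, v_4, v$, because $v$ sees $v_1,v_2,v_3,v_4$ but $v_1$ and $v_3$ received equal (matched) colors so the single matching edge $M_{vv_1}$ forbids one color and likewise $M_{vv_3}$ forbids one color, for a total of at most four forbidden colors — so I must also use that the matching $M_{v v_1}$ and $M_{v v_3}$ can be assumed straight after renaming along a tree (Lemma \ref{straight1}), making the two forbidden colors coincide; this frees a color for $v$.

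For part (b), the hypotheses are exactly engineered so the identification is valid: we are told $G$ has no bad $4$-cycle $C$ with $\{v_1,v,v_3\}\subset V(C)$ and that $G'' := (G - \{v,v_2,v_4\})/(v_1{=}v_3)$ has no $4$-cycles adjacent to $6$-cycles, i.e. $G'' \in \mathcal{G}_2$. So I do not need to re-derive membership; the "no bad $4$-cycle" hypothesis plus Lemma \ref{good-cycle} (which classifies bad $8^-$-cycles in $\mathcal{G}_2$) and Lemma \ref{separating} rule out $v_1,v_3$ having a common neighbor $w\neq v$: such a $w$ would create a $4$-cycle $v_1vv_3w$, which — being through the identified vertices — would have to be bad or separating, contradicting the hypothesis or Lemma \ref{separating}. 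Having checked legality, I argue as before: $C_0$ persists in $G''$ (same reason, $N(v)\cap D = \emptyset$), minimality gives a DP-$4$-coloring of $G''$ extending $\phi_0$, I pull it back so $v_1,v_3$ receive matched colors, and then color $v_2,v_4,v$ in that order. Again the key is that after making the star at $v$ straight (Lemma \ref{straight1}), the equal colors on $v_1,v_3$ forbid only one color for $v$, so $v$ has a free color, contradicting that $(G,C_0)$ is a counterexample. Hence at most one of $v_2, v_4$ is a $4$-vertex.

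The main obstacle in both parts is verifying that the identification does not destroy the defining property of the class and that $v_1,v_3$ genuinely have no second common neighbor — i.e., the casework ruling out new short cycles (in (a)) and the translation of the "bad $4$-cycle" / separating-cycle hypotheses into "no common neighbor" (in (b)). The coloring-extension step is routine once the star at $v$ is straightened and the vertices are colored in the order $v_2, v_4, v$; the only subtlety there is remembering that identification forces $v_1$ and $v_3$ to share a color, which is precisely what gives $v$ its missing color rather than costing it one.
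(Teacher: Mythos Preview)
Your part (b) is essentially the paper's argument: straighten $M_{vv_1}$ and $M_{vv_3}$ via Lemma~\ref{straight1}, identify $v_1$ with $v_3$ in $G-\{v,v_2,v_4\}$, invoke minimality, and color $v_2,v_4,v$ in that order. The paper dispatches loops and parallel edges just as you describe, using the no-bad-$4$-cycle hypothesis together with the absence of separating $3$-cycles and separating good $4$-cycles.

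Part (a), however, has its indices crossed in a way that breaks the argument. You assume $v_1,v_3$ are the two $4$-vertices and then identify $v_1$ with $v_3$ while deleting $\{v,v_2,v_4\}$; your greedy step then needs $|L^*(v_2)|,|L^*(v_4)|\ge 1$, which requires $d(v_2),d(v_4)\le 4$---but nothing in the hypothesis bounds those degrees (and your parenthetical ``each of degree $\le 4$'' is unjustified). The identification must be of the pair \emph{opposite} to the assumed $4$-vertices: to show at most one of $v_1,v_3$ is a $4$-vertex, identify $v_2$ with $v_4$ in $G-\{v,v_1,v_3\}$ (after straightening $M_{vv_2},M_{vv_4}$). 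Then $v_1,v_3$ each have exactly three colored neighbors, and $v$ sees $v_2,v_4$ forbidding a single common color plus the freshly colored $v_1,v_3$, so the extension goes through. This is precisely the paper's proof of (b) with the roles of the two opposite pairs swapped, which is why the paper simply says the $\mathcal{G}_1$ case is ``similar.''

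A secondary gap: your check that $G''\in\mathcal{G}_1$ appeals to Lemma~\ref{property1}, which concerns face adjacencies and does not control cycles created by identification. The right tool is Lemma~\ref{separating}. A new $k$-cycle through the identified vertex corresponds (via the path $v_2vv_4$, in the corrected setup) to a $(k{+}2)$-cycle in $G$ that separates $v_1$ from $v_3$ by the cyclic order at $v$; since $G\in\mathcal{G}_1$ has no separating $7^-$-cycle, no new $5^-$-cycle appears in $G''$, and hence no forbidden $4$--$5$ adjacency is created. The same observation yields simplicity of $G''$ (no loops or multiple edges).
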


\begin{proof} Again we will only present the proof for the case $G \in \mathcal{G}_2$. The proof for the case  $G \in \mathcal{G}_1$ is similar.

By Lemma~\ref{straight1}, we can rename each of $L(v_1),L(v),L(v_3)$ to make $M_{vv_1}$ and $M_{vv_3}$ straight. Let $G'$ be the graph by identifying $v_1$ and $v_3$ of $G-\{v_2,v,v_4\}$ and let $\mathcal{M}'_L$ be the restriction of $\mathcal{M}_L$ to $E(G')$. Since $\{v_1,v_3\}\cap D=\emptyset$, the identification does not create an edge between vertices of $D$, and thus $\phi_0$ is also a $DP$-$4$-coloring of the subgraph $D$ of $G'$. Note that $G'$ contains no $4$-cycles adjacent to $6$-cycles by the assumption. Also $G'$ contains no loops or parallel edges, since $v_1,v,v_3$ are not on a bad $4$-cycle and $G$ has no separating $3$-cycles or separating good $4$-cycles. Therefore, $\mathcal{M}'_L$ is also a matching assignment on $G'$. Since $|V(G')|<|V(G)|$,  $\phi_0$ can be extended to  a $DP$-$4$-coloring $\phi$ of $G'$. For $x\in\{v_2,v,v_4\}$, let $L^*(x)=L(x)\setminus\cup_{ux\in E(G)}\{c'\in L(x):(u,c)(x,c')\in M_{ux}$ and $(u,c)\in \phi\}$. Then $|L^*(v_2)|=|L^*(v_4)|\ge1$, and $|L^*(v)|\ge3$. So we can extend $\phi$ to a $DP$-$4$-coloring of $G$ by coloring $v_1$ and $v_3$ with the color of the identified vertex and then color $v_2,v_4,v$ in order, a contradiction.
\end{proof}

\section{Proof of Theorem~\ref{main1}}\label{proof1}

In this section we prove the following result, which is stronger than Theorem~\ref{main1}. Recall that we use $\mathcal{G}_1$ to denote the class of planar graphs without $4$-cycles adjacent to $5$-cycles.

\begin{theorem}\label{strong1}
If $G \in \mathcal{G}_1$, then any pre-coloring of a $7^-$-cycle can be extended to a $DP$-$4$-coloring of $G$.
\end{theorem}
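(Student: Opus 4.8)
The plan is to argue by contradiction via a discharging argument on a minimal counterexample $(G,C_0)$, where $C_0$ is a $7^-$-cycle whose pre-coloring $\phi_0$ does not extend. By Lemma~\ref{c0-nonseparating} we may embed $G$ so that $C_0$ bounds the outer face $D$. The preliminary lemmas already give the local structure I need: every internal vertex has degree $\ge 4$ (Lemma~\ref{minimum}), there are no separating $7^-$-cycles (Lemma~\ref{separating}), no $3$-face is adjacent to a $4$-face and no $T_i$ with $i\ge3$ exists (Lemma~\ref{property1}), and the identification reducibility of Lemma~\ref{reduce}(a) forbids an internal $4$-vertex from having two opposite $4$-neighbors off $D$. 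The first block of work is to extract a few more reducible configurations in this same spirit — e.g.\ bounding how $3$-faces cluster around low-degree internal vertices, ruling out certain $(4,4,\dots)$-faces and short separating cycles, and showing internal $4$-vertices on many $3$-faces force structure that contradicts minimality by an identification/recoloring argument like the proof of Lemma~\ref{reduce}.

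Next I would set up Euler's formula in the standard ``outer face is special'' form. Using $\sum_{v}(d(v)-4) + \sum_{f}(d(f)-4) = -8$ (with the outer face $D$ of length $|C_0|\le 7$ contributing a bounded positive amount), assign initial charge $\mathrm{ch}(v)=d(v)-4$ to each vertex and $\mathrm{ch}(f)=d(f)-4$ to each face; the total is a fixed negative constant plus the bounded contribution of $D$. The discharging rules should move charge from $5^+$-vertices and $6^+$-faces to $3$-faces and to $3$-vertices on $D$, and should handle the special outer face by letting $D$ absorb/emit a controlled amount to its incident vertices and faces. A natural rule set: each $3$-face receives $\tfrac13$ from each incident $5^+$-vertex and the rest from its incident $6^+$-faces (legal since, by Lemma~\ref{property1}(a), a $3$-face is never next to a $4$- or $5$-face, and by (c) the faces around a pair of adjacent $3$-faces are $6^+$-faces); $6^+$-faces also help $3$-faces two steps away through a shared $T_2$; boundary $3$-vertices are fed by $D$.

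The main obstacle, as usual in these proofs, is verifying that after discharging every vertex and every face ends with nonnegative charge while the global sum stays strictly negative (once $|C_0|$ is small) — i.e.\ choosing the rules so the worst cases balance. The delicate cases are: (i) internal $4$-vertices, which neither give nor receive and so must simply not be over-incident to $3$-faces — here Lemma~\ref{property1}(d) (at most $d(v)-2$ incident $3$-faces) and the new ``opposite $4$-neighbor'' reductions from Lemma~\ref{reduce}(a) must combine to show a $4$-vertex on two $3$-faces has enough $5^+$-neighbors; (ii) $6$-faces, which have only $\mathrm{ch}=2$ to spread among possibly many incident $3$-faces sharing its edges, so I must use that adjacent $3$-faces are separated by $6^+$-faces to limit how many $3$-faces lean on a single $6$-face; and (iii) bookkeeping near the outer face, making sure $D$'s bounded charge surplus is never over-spent. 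I expect the argument to reduce to a finite check of vertex types ($4$-, $5$-, $6^+$-vertices, internal vs.\ boundary) and face types ($3$-, $6$-, $7^+$-faces, with $4$- and $5$-faces isolated from $3$-faces), each handled by a short counting estimate; the case $G\in\mathcal{G}_2$ in Theorem~\ref{strong1}'s analogue will be similar but messier because $T_4=W_4$ is allowed, which is precisely why that proof is deferred to Section~\ref{proof2}.
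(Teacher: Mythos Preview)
Your outline follows the same discharging-on-a-minimal-counterexample template as the paper, but it contains a concrete structural error that breaks the proposed rules. You assert that ``by Lemma~\ref{property1}(a), a $3$-face is never next to a $4$- or $5$-face,'' and later list ``$4$- and $5$-faces isolated from $3$-faces'' among the face types. Lemma~\ref{property1}(a) says only that a $3$-face cannot be adjacent to a $4$-face; nothing prevents a $3$-face from sharing an edge with a $5$-face in $\mathcal{G}_1$ (a $3$-cycle and a $5$-cycle sharing an edge do not by themselves produce a $4$-cycle adjacent to a $5$-cycle). The paper's proof spends a nontrivial portion of its $3$-face analysis precisely on the case where $f$ is adjacent to a $5$-face, and its rule (R2) has $5$-faces sending $\tfrac13$ to adjacent internal $(4,4,4)$-faces and $\tfrac16$ to other adjacent internal $3$-faces.

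Because of this, your proposed rule ``each $3$-face receives $\tfrac13$ from each incident $5^+$-vertex and the rest from its incident $6^+$-faces'' cannot close out an internal $(4,4,4)$-face that happens to be adjacent to one or more $5$-faces: such a face has no $5^+$-vertex to draw from and may have fewer than three $6^+$-face neighbors. The paper handles this by (i) introducing the \emph{diamond} configuration (two adjacent $3$-faces whose common vertices are both $4$-vertices) with a larger transfer of $\tfrac12$ from $6^+$-faces, (ii) using Lemma~\ref{reduce}(a) to bound how many internal $(4,4,4)$-faces a single $5$-face can be adjacent to, and (iii) a careful case split on whether the $5^+$-vertex on a $(4,4,5^+)$- or $(4,5^+,4)$-face is opposite the $5$-face edge. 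Your sketch also inverts the target of the contradiction: with $\mu(D)=d(D)+4$ the total charge is exactly $0$, and the goal is to show every final charge is $\ge 0$ with $\mu^*(D)>0$, not that the sum stays negative. These are fixable, but as written the rule set would not balance.
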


Let $(G,C_0)$ be a minimal counterexample to Theorem~\ref{strong1}.  That is, $G\in \mathcal{G}_1$, $C_0$ is a $7^-$-cycle in $G$ that is pre-colored with a $DP$-$4$-coloring $\phi_0$ that can not be extended to a $DP$-$4$-coloring of $G$, and $|V(G)|$ is as small as possible. Consider a planar embedding of $G$. By Lemma~\ref{c0-nonseparating}, we may assume that $C_0$ is the boundary of the outer face $D$ of $G$.

Now for each $x\in V\cup F\backslash \{D\}$, let $x$ have an initial charge of $\mu(x)=d(x)-4$, and $\mu(D)=d(D)+4$. By Euler's Formula, $\sum_{x\in V\cup F}\mu(x)=0$. Let $\mu^*(x)$ be the charge of $x\in V\cup F$ after the discharging procedure. To lead to a contradiction, we shall prove that $\mu^*(x)\ge 0$ for all $x\in V\cup F$ and $\mu^*(D)>0$.

We call a subgraph of $G$ a {\em{diamond}} if it contains two adjacent $3$-faces and the two common vertices of the $3$-faces are both $4$-vertices. The discharging rules:

\begin{enumerate}[(R1)]
\item Every internal $5^+$-vertex gives its initial charge evenly to its incident $3$-faces;
\item Every $5$-face $f \ne D$ gives $\frac{1}{3}$ to each adjacent internal $(4,4,4)$-face and $\frac{1}{6}$ to every other adjacent internal $3$-face;
\item Every $6^+$-face $f\ne D$ gives $\frac{1}{2} t $ to every adjacent internal $3$-face that is in a diamond and gives $\frac{1}{3} t $ to every adjacent internal $3$-face that is not in a diamond, where $t$ is the number of common edges of $f$ and the $3$-face;
\item The outer-face $D$ gets $\mu(v)$ from each incident vertex and gives $1$ to each non-internal $3$-face.
\item After the above rules, each face other than $D$ gives its surplus charge to $D$.
\end{enumerate}

Note that, by (R1) and ~\ref{property1} part (d), every internal $3$-face gets at least $\frac{1}{3}$ from every incident $5^+$-vertex $v$. Furthermore, if $v$ is not a $5$-vertex adjacent to three $3$-faces, then $f$ gets at least $\frac{1}{2}$ from $v$. Also note that, a vertex $v$ on the outer face may have degree $2$ or $3$; in these cases, $v$ will give a negative charge to the outer face $D$ by (R4), in other words, the vertex $v$ actually receives a positive charge from $D$.

We first check the final charge of vertices in $G$. Let $v$ be a vertex in $G$. If $v\in V(D)$, then by (R4) $\mu^*(v)\ge0$. If $v\notin V(D)$, then by Lemma ~\ref{minimum}, $d(v)\ge 4$.
If $d(v)=4$, then $v$ is not involved in the discharging procedure. So $\mu^*(v)=\mu(v)=d(v)-4=0$. If $d(v)\ge5$, then by (R1) $\mu^*(v)\ge0$.

Now we check the final charge of faces other than $D$ in $G$. Let $f$ be a face in $G$. By the rules, we only need to show that after (R1)-(R4), $f$ has a nonnegative charge, and we denote this charge by  $\mu'(f)$. If $d(f)=4$, then $f$ is not involved in the discharging procedure. So $\mu'(f)=\mu(f)=d(f)-4=0$. Suppose that $d(f)=5$ and $v_1$, $v_2$, $v_3$, $v_4$, and $v_5$ are the vertices on $f$ in cyclic order. If $f$ is adjacent to at most one internal $(4,4,4)$-face. then by (R2) $\mu'(f)\ge5-4-\frac{1}{3}-\frac{1}{6}\cdot 4=0$. By Lemma~\ref{reduce}, $f$ is adjacent to at most two internal $(4,4,4)$-faces. Assume that $f_1$ and $f_2$ are the internal $(4,4,4)$-faces that are adjacent to $f$. Then the edge shared by $f$ and $f_1$ is not adjacent to the edge shared by $f$ and $f_2$. By symmetry, assume that  $v_1, v_2 \in V(f_1)$ and $v_3,v_4 \in V(f_2)$. Now by Lemma~\ref{reduce}, the edge $v_2v_3$ is not adjacent to any internal $3$-face. Therefore, $\mu'(f) \geq  5-4 - \frac{1}{3} \cdot 2 -\frac{1}{6} \cdot 2 =0$.

Suppose that $d(f)\ge6$. If $f$ is adjacent to $m\ge0$ $3$-faces in diamonds, then $f$ is adjacent to at least $\lceil\frac{m}{2}\rceil$ $6^+$-faces. So
$f$ is adjacent to at most $(d(f)-m-\lceil \frac{m}{2}\rceil)$ $3$-faces not in diamonds. So $\mu'(f)\ge d(f)-4-\frac{1}{2}m-\frac{1}{3}(d(f)-m-\lceil\frac{m}{2}\rceil)\ge\frac{2}{3}d(f)-4\ge0$.

Suppose $d(f)=3$. If $f$ is not internal , then $\mu'(f)\ge-1+1=0$ by (R4). So we may assume that $f$ is an internal $3$-face. Let $f=[uvw]$ and let $f_1, f_2$ and $f_3$ be the faces sharing edges $uv$, $vw$, $uw$ with $f$, respectively. By Lemma~\ref{property1} part (b), at most one of $f_1,f_2$ and $f_3$ is a $3$-face.

\textbf{Case 1:} None of $f_1, f_2$ and $f_3$ is a $3$-face.

Then every face adjacent to $f$ is a $5^+$-face by Lemma ~\ref{property1} part (a). If $f$ is not adjacent to any $5$-faces, then by (R3) $f$ gets at least $\frac{1}{3}\cdot 3$ from adjacent faces. So $\mu'(f)\ge-1+1=0$. Therefore we may assume that $f$ is adjacent to a $5$-face, by symmetry assume that $f_1$ is a $5$-face.  If $f$ is a $(4,4,4)$-face, then by (R2) and (R3) $f$ gets at least $\frac{1}{3} \cdot 3$ from adjacent faces. So $\mu'(f)\ge-1+1=0$.  If $f$ is adjacent to at least two $5^+$-vertices, then $f$ gets at least $\frac{1}{3}\cdot2$ from the two $5^+$-vertices and $\frac{1}{6}\cdot3$ from adjacent faces. So $\mu'(f)\ge-1+\frac{2}{3}+\frac{1}{2}>0$. Therefore, we may assume that $f$ contains exactly one $5^+$-vertex. By symmetry, there are two cases: either $d(w) \geq 5$ or $d(v) \geq 5$.

First assume that $d(w) \geq 5$. Then $f$ gets $\frac{1}{6}$ from $f_1$ by (R2). If $d(w)\ge6$ or $d(w)=5$ and $w$ is incident to at most two $3$-faces, then $w$ gives at least $\frac{1}{2}$ to $f$ by (R1). Note that $f$ gets at least $\frac{1}{6}\cdot2$ from $f_2$ and $f_3$. So $\mu'(f)\ge-1+\frac{1}{6}\cdot3+\frac{1}{2}=0$. If $d(w)=5$ and $w$ is incident to at least three $3$-faces, then by Lemma ~\ref{property1} parts (b) and (c), $w$ is on exactly three $3$-faces and both $f_2$ and $f_3$ are $6^+$-faces. So $f$ gets at least $\frac{1}{3}\cdot3$ from $f_2$, $f_3$, and $w$;  and hence, $\mu'(f)\ge-1+\frac{1}{6}+1>0$.

Next we assume that $d(v) \geq 5$. Since $f_1$ is a $5$-face, $v$ can not be a $5$-vertex adjacent to exactly three $3$-faces. Therefore, $v$ is either a $6^+$-vertex or a $5$-vertex adjacent to at most two $3$-faces. So $f$ gets at least $\frac{1}{2}$ from $v$ by (R1). Note that $f$ gets at least $\frac{1}{6}\cdot3$ from adjacent faces. So $\mu'(f)\ge-1+\frac{1}{2}+\frac{1}{2}=0$.

\textbf{Case 2:} One of $f_1,f_2$ and $f_3$ is a $3$-face.

By symmetry assume that $f_1=[uvx]$ is a $3$-face. By Lemma~\ref{property1} part (c), the faces that share edges $vw$ and $uw$ with $f$ are $6^+$-faces. If both $u$ and $v$ are $4$-vertices, then $f$ is in a diamond. So $f$ gets $\frac{1}{2}\cdot2$ from adjacent $6^+$-faces. If at least one of $u$ and $v$ is a $5^+$-vertex, then $f$ gets at least $\frac{1}{3}$ from the $5^+$-vertex and $\frac{1}{3}\cdot2$ from adjacent $6^+$-faces. In both cases, $\mu'(f)\ge-1+1=0$.

\medskip
Finally, we check the final charge of $D$ and show that $\mu^*(D)>0$. Let $f_3$ be the number of non-internal $3$-faces and let $b$ be the charge that $D$ gets from other faces by (R5). Let $E(D, V(G)-D)$ be the set of edges between $D$ and $V(G)-D$ and let $s$ be its size. Recall that a $T_i$-subgraph of $G$ consists of $i$ adjacent $3$-faces. Since $G \in \mathcal{G}_1$, $G$ contains no $T_i$-subgraphs with $i \geq 3$. Therefore, every non-internal $3$-faces is either in a $T_1$-subgraph or a $T_2$-subgraph of $G$. We require the following lemma about the outer face $D$.

\begin{lemma} \label{lemma:D1} The following statements are true about $D$.
\begin{itemize}
\item[(a)] $D$ has no chord, and moreover, if $v_1,v_2\in V(D)$ and $v_1v_2\notin E(G)$, then $v_1$ and $v_2$ have no common neighbors in int(D).
\item[(b)] For $i \in \{1,2\}$, a $T_i$-subgraph of $G$ shares at most one edge with $D$.
\item[(c)] Let $f$ be a $5$-face that shares $k$ edges with $D$. If $k=1$, then $f$ gives at least $\frac{1}{2}$ to $D$; if $k=2$, then $f$ gives at least $\frac{2}{3}$ to $D$; if $k=3$, then $f$ gives $1$ to $D$.
\item[(d)] Let $f$ be a $6^+$-face that shares $k$ edges with $D$. Then $f$ gives at least $\frac{1}{3}(k+1)$ to $D$.
\end{itemize}
\end{lemma}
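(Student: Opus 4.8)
The plan is to establish the four assertions in the listed order, since (b) rests on (a) and the discharging estimates in (c)--(d) all flow from a single remark about charge transfer near $D$. For part (a), suppose first that $D$ has a chord $v_1v_2$. Inside the closed disk bounded by $D$ this chord produces two closed regions whose boundary cycles $C',C''$ satisfy $|V(C')|+|V(C'')|=|V(D)|+2\le 9$, so both are $6^-$-cycles with $V(C'),V(C'')\subseteq V(D)$; the restrictions of $\phi_0$ to $C'$ and to $C''$ are therefore precolorings of these cycles (a precoloring of $D$ respects the chord). Since $v_1v_2\notin E(D)$, each $v_1v_2$-arc of $D$ has an internal vertex, so each region is a proper subgraph of $G$; by minimality $\phi_0$ extends over each region, the two extensions agree on $\{v_1,v_2\}$, and together they extend $\phi_0$ to $G$, a contradiction. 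For the second half, if non-adjacent $v_1,v_2\in V(D)$ had a common neighbour $w\in\mathrm{int}(D)$, then $v_1wv_2$ together with the shorter $v_1v_2$-arc of $D$ is a cycle of length at most $\lfloor|V(D)|/2\rfloor+2\le 5$; if it is separating we contradict Lemma~\ref{separating}, and otherwise the region it encloses has no internal vertex, whereupon descending to a still shorter triangle through $w$ (legitimate because $D$ has no chord) eventually exhibits a single enclosed face incident to an internal $3^-$-vertex, contradicting Lemma~\ref{minimum}.

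For part (b), let $S$ be a $T_i$-subgraph with $i\in\{1,2\}$; its edge set is the three edges of one $3$-face, or the five edges $ab,ac,bc,ad,bd$ of two $3$-faces $[abc],[abd]$ sharing $ab$. Note that a minimal counterexample has an internal vertex, as otherwise $\phi_0$ trivially extends. If $S$ shared two edges with $D$, those two edges share a vertex, so the vertices they span all lie on $D$; then either the remaining edge of a triangle of $S$ joins two vertices of $D$ and is a chord — impossible by (a) — or $S$ contains two consecutive edges of $D$, which forces a separating $4$-cycle through a vertex of $S$ (contradicting Lemma~\ref{separating}, since all $7^-$-cycles are good), or forces $G=S$. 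Hence $S$ meets $D$ in at most one edge.

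For parts (c) and (d), the crucial remark is that an edge $e$ of a face $f\neq D$ with an endpoint on $D$ cannot be shared with an internal $3$-face, so $f$ transfers no charge across $e$ under (R2) or (R3). Hence, if $f$ shares $k\ge1$ edges with $D$, part (a) implies the $D$-edges of $f$ lie in arcs separated by at least two non-$D$-edges, so at most $d(f)-k-2$ edges of $f$ can border an internal $3$-face; call these the dangerous edges. For (c), $f$ is a $5$-face with charge $1$: when $k=1$ these at most two dangerous edges are consecutive, so by Lemma~\ref{reduce}(a) at most one borders a $(4,4,4)$-face and the loss is at most $\tfrac13+\tfrac16=\tfrac12$; when $k=2$ there is at most one dangerous edge and when $k=3$ there are none. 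By (R5), $f$ therefore sends at least $\tfrac12,\tfrac23,1$ to $D$, respectively. For (d), $f$ is a $6^+$-face with charge $d(f)-4$, and charging at most $\tfrac12$ across each dangerous edge already gives a surplus at least $\tfrac{k+1}{3}$ in every case except $d(f)=6,\,k=1$. In that final case the (at most) three dangerous edges are consecutive, and using Lemma~\ref{property1}(b),(c), Lemma~\ref{minimum}, and the definition of a diamond one checks that the middle dangerous edge cannot border a diamond-$3$-face: otherwise that $3$-face lies in a diamond whose two degree-$4$ common vertices each gain a fifth neighbour, or a $T_3$-subgraph appears. So the loss is at most $\tfrac12+\tfrac13+\tfrac12=\tfrac43$ and $f$ sends at least $\tfrac23=\tfrac{k+1}{3}$ to $D$.

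The hardest point is exactly this tight case $d(f)=6$, $k=1$ of (d): the crude per-edge bound is insufficient there, and one must run a careful local analysis around the two endpoints of the middle dangerous edge, invoking the absence of $T_3$-subgraphs and the degree bound on internal vertices. A secondary irritant is the degenerate ``enclosed region with no internal vertex'' sub-case of the common-neighbour assertion in (a), which needs its own short inductive argument.
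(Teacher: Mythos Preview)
Your proof attempt has genuine gaps in exactly the two spots you flagged as the most delicate.

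\textbf{Part (a), common-neighbour case.} Your argument here does not go through. Having built the short cycle $C$ of length $\le 5$ through $w$, you say that if $C$ is non-separating then ``descending to a still shorter triangle through $w$ eventually exhibits an internal $3^-$-vertex.'' But nothing forces such a vertex: when $\mathrm{int}(C)=\emptyset$, every vertex on $C$ (including $w$) may still have all its remaining incidences on the \emph{other} side of $C$, so no degree bound is violated. The paper's argument is different and clean: since $d(w)\ge 4$, either all neighbours of $w$ lie on $D$ (and then four neighbours on a $7^-$-cycle force a $4$-cycle adjacent to a $5$-cycle, i.e.\ $D$ would be a bad cycle, contradicting Lemma~\ref{good-cycle}), or $w$ has a neighbour $w'\in\mathrm{int}(D)$, in which case one of the two $7^-$-cycles formed by $v_1wv_2$ together with an arc of $D$ separates $w'$ from the other arc, contradicting Lemma~\ref{separating}. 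You should replace your inductive descent by this argument.

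\textbf{Part (d), the tight case $d(f)=6$, $k=1$.} Your central claim --- that the middle dangerous edge $v_4v_5$ cannot border a diamond $3$-face --- is false as stated. If $[v_4v_5x]$ is a $3$-face and the diamond partner shares the edge $v_4x$ (so $v_4,x$ are the degree-$4$ common vertices), then $v_4$ has exactly the four neighbours $v_3,v_5,x,y$; no ``fifth neighbour'' appears, and a $T_3$ is forced only if you \emph{also} assume $v_3v_4$ lies on a $3$-face. What is actually true, and what does yield the bound you want, is the weaker statement: if the middle edge lies on a diamond $3$-face, then one of the two outer dangerous edges lies on no $3$-face at all (else a $T_3$ is created at $v_4$ or $v_5$). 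This gives loss $\le \tfrac12+0+\tfrac12=1$ in that case, while if the middle edge is not on a diamond the loss is $\le\tfrac12+\tfrac13+\tfrac12=\tfrac43$; either way the surplus is at least $\tfrac23$. The paper avoids this case split entirely by a redistribution trick: whenever an edge $e$ of $f$ borders a diamond $3$-face, the no-$T_3$ property guarantees an adjacent edge $e_0$ of $f$ not on any $3$-face, and one shifts $\tfrac16$ of the $\tfrac12$ charge from $e$ to $e_0$. After this shift every edge of $f$ carries at most $\tfrac13$, the $k$ edges on $D$ carry $0$, and the two edges with exactly one endpoint on $D$ carry at most $\tfrac16$; this gives the bound $\tfrac13(k+1)$ uniformly for all $d(f)\ge 6$ with no special case.

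A minor further slip: in part (b) you assert that two $D$-edges of a $T_2$ must share a vertex, but in a $T_2=[abc]\cup[abd]$ the edges $ac$ and $bd$ are disjoint. The conclusion still follows from (a) (all four vertices land on $D$, and then some edge of the $T_2$ becomes a chord), but your stated reason is wrong.
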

\begin{proof} (a). Suppose otherwise that $D$ has a chord. Since by Lemma~\ref{separating} $G$ has no separating $7^-$-cycle, $V(D)=V(G)$. So the coloring on $D$ is also a $DP$-$4$-coloring of $G$, a contradiction. Now assume that $v_1$ and $v_2$ are two non-adjacent vertices on $D$ that have a common neighbor $u \in int(D)$. Then by Lemma~\ref{minimum} $d(u)\ge 4$. Recall that $D$ is a $7^-$-face. If all neighbors of $u$ are on $D$, then it is routine to check that $G$ would contain a $4$-cycle adjacent to a $5$-cycle, a contradiction. Therefore, $u$ has a neighbor in int(D). It follows that $u$ is contained in a separating $7^-$-cycle, contrary to Lemma~\ref{separating}.

(b). This part follows easily from (a).

(c). If $k=1$, then $f$ is adjacent to at most two internal $3$-face and at most one of which is a $(4,4,4)$-face by Lemma~\ref{reduce}. So $f$ gives at least $1-\frac{1}{3}-\frac{1}{6}=\frac{1}{2}$ to $D$ by (R2) and (R5). If $k=2$, then $f$ is adjacent to at most one internal $3$-face, which gets at most $\frac{1}{3}$ from $f$. So $f$ gives at least $\frac{2}{3}$ to $D$ by (R2) and (R5). If $k=3$, then $f$ is adjacent to no internal $3$-face. So $f$ gives $1$ to $D$ by (R5).

(d). Note that by (R3) every $6^+$-face $f\ne D$ gives $\frac{1}{2} t $ to every adjacent internal $3$-face that is in a diamond and gives $\frac{1}{3} t $ to every adjacent internal $3$-face that is not in a diamond, where $t$ is the number of common edges of $f$ and the $3$-face. Suppose that $f$ is adjacent to an internal $3$-face $f_0$ that is in a diamond. Let $e$ be a common edge of $f$ and $f_0$. Then by Lemma ~\ref{property1} part (b), there exists an edge $e_0$ of $f$ incident to $e$ that cannot be on a $3$-face. So we can split the charge of $\frac{1}{2}$ between $e$ and $e_0$ such that $e$ carries $\frac{1}{3}$ and $e_0$ carries $\frac{1}{6}$. Furthermore, since $e_0$ may be incident to at most two edges of $f$ that are in a diamond, $e_0$ carries at most $\frac{1}{6}\cdot2=\frac{1}{3}$. Therefore, we may treat all the charge that $f$ sends out as at most $\frac{1}{3}$ through each edge of $f$.
Moreover, a common edge of $f$ and $D$ will not carry any charge; and if $e$ is an edge of $f$ that has exactly one vertex on $D$, then $e$ is incident to at most one edge of $f$ that belongs to an internal triangle within a diamond. So the edge $e$ carries at most a charge of $\frac{1}{6}$.  Now assume that $f$ shares $k$ edges with $D$. Since $D$ is chordless, these $k$ edges form a path. So the face $f$ sends out a charge of at most $\frac{1}{3} \cdot (d(f)-(k+2)) + 2 \cdot \frac{1}{6} = \frac{1}{3} \cdot (d(f)-(k+1))$. Therefore, after (R1)-(R4), the remaining charge of $f$ is at least $d(f)-4 - \frac{1}{3} (d(f)-(k+1)) \geq \frac{1}{3}(k+1)$. By (R5) $D$ can get at least $\frac{1}{3}(k+1)$ from $f$.
\end{proof}

Since $D$ has no chord, every non-internal triangle contains two edges of $E(D,V(G) - D)$. Moreover, by Lemma ~\ref{lemma:D1}, every $T_i$-subgraph meets $D$ by at most one edge for $i \in \{1,2\}$.  Let $t_i$ be the number of $T_i$-subgraphs formed by non-internal $3$-faces for $i \in \{1,2\}$ and let $s'$ be the number of edges in $E(D, V(G) - D)$ that are not in any $3$-faces. Then $f_3= t_1 + 2t_2$ and $s = s' + 2 t_1 + 3 t_2$. Now by (R4) and (R5),
\begin{align}
\mu^*(D)&=d(D)+4+\sum_{v\in D} (d(v)-4)-f_3+b\\
&=d(D)+4+\sum_{v\in D} (d(v)-2)-2d(D)-f_3+b\\
&=4-d(D)+s-f_3+b\\
&\ge4-d(D)+s'+f_3'+b
\end{align}
where $f_3' = t_1+t_2$, and $b$ is the charge $D$ receives by (R5).

\begin{lemma}
Let $k=s-f_3=s'+t_1+t_2$,  then $k\geq 1$, and for $d(D)\geq 5$, we have
$$b\geq\left\{
\begin{array}{ll}
\frac{d(D)}{3},& k=1,\\[2mm]
\frac{d(D)-k}{3}, & k\geq 2.
\end{array}
\right.
$$
\end{lemma}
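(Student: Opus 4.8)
I would estimate $b=\sum_{f\ne D}\mu'(f)$ face by face: since $\mu'(f)\ge 0$ was already checked for every $f\ne D$, the faces not meeting $D$ can only help, so it is enough to collect the charge returned by the faces adjacent to $D$, using Lemma~\ref{lemma:D1}(c),(d). For $k\ge 1$: a minimal counterexample is connected and $V(G)\ne V(D)$ (else $\phi_0$ extends to $G$ at once), so $G$ has an internal vertex and $s\ge 1$; every $e\in E(D,V(G)-D)$ either lies in no $3$-face, and is then counted by $s'$, or lies in a non-internal $3$-face, which by Lemma~\ref{property1}(b) sits in a unique $T_1$- or $T_2$-subgraph, so $e$ is counted by one of $s',t_1,t_2$; hence $k=s'+t_1+t_2\ge 1$.

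\textbf{A lower bound for $b$.} Next I would set up, for each $e\in E(D)$, the face $f_e\ne D$ across $e$ — well defined since $D$ is chordless (Lemma~\ref{lemma:D1}(a)), with each adjacent face meeting $D$ in a contiguous arc of $j\ge 1$ edges. By Lemma~\ref{lemma:D1}(c) a $5$-face on an arc of $j\in\{1,2,3\}$ edges returns at least $j/3$, and $1/6$ more when $j=1$; by Lemma~\ref{lemma:D1}(d) a $6^+$-face on an arc of $j$ edges returns at least $(j+1)/3$; a $3$- or a $4$-face returns nothing. So, writing $w$ for the number of edges of $D$ whose opposite face is a $3$- or a $4$-face and summing over faces adjacent to $D$,
\[
b\ \ge\ \tfrac13\bigl(d(D)-w\bigr)+\mathrm{Bon},
\]
where $\mathrm{Bon}\ge 0$ is an extra $1/3$ per adjacent $6^+$-face plus $1/6$ per adjacent $5$-face meeting $D$ in exactly one edge. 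It then remains to bound $w$ against $k$ (and, when $k=1$, to harvest $\mathrm{Bon}$).

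\textbf{Bounding $w$.} The edges of $D$ opposite a $3$-face number at most $t_1+t_2$ (a $T_1$ meets $D$ in $\le 1$ edge, a $T_2$ has $\le 1$ of its two $3$-faces on $D$ by Lemma~\ref{lemma:D1}(b), and there is no larger $T_i$ by Lemma~\ref{property1}(b)). For $4$-faces the key is Lemma~\ref{property1}(a): a $3$-face is never adjacent to a $4$-face, so a $4$-face $g$ touching $D$ is a $4$-cycle whose two edges outside $D$ lie in $E(D,V(G)-D)$ and in no $3$-face — hence in $s'$ (the ``wings'' of $g$) — while the endpoint of a wing outside $V(D)$ is an internal $4^+$-vertex (Lemma~\ref{minimum}). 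Walking the wings around $D$, and using that a wing lies on at most two $4$-faces, I would show the edges of $D$ opposite $4$-faces number at most $s'$ except along runs of consecutive $4$-faces, where any excess is absorbed either by the extra edges of $E(D,V(G)-D)$ forced at the internal apices of the $4$-cycles in the run (these raise $s'$, hence $k$) or by the $5^+$-faces that must flank the run (a $4$-cycle cannot be adjacent to a $5$-cycle, so a flank is a $6^+$-face or a $5$-face meeting $D$ singly, feeding $\mathrm{Bon}$). For $k\ge 2$ this gives $w\le k$, or else $w-k\le 3\,\mathrm{Bon}$; in either case $b\ge\tfrac13(d(D)-w)+\mathrm{Bon}\ge\tfrac13(d(D)-k)$. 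For $k=1$, $s'\le 1$ rules out any $4$-face touching $D$ and forces $w\le 1$; and if $w=1$ the unique touching $3$-face lies in the sole $T_1$- or $T_2$-subgraph, and the two $D$-edges flanking it face distinct $5^+$-faces, each meeting $D$ in a single edge (else $s'\ge 2$, or a $5$-face is adjacent to a $3$-face), so $\mathrm{Bon}\ge 1/3$ and $b\ge\tfrac13(d(D)-1)+\tfrac13=\tfrac13 d(D)$.

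\textbf{Main obstacle.} The genuinely delicate part is this $4$-face bookkeeping: a maximal run of $4$-faces along $D$ can be opposite more edges of $D$ than the number of $s'$-edges it immediately produces, and one must argue carefully that every such excess is paid for — by the edges of $E(D,V(G)-D)$ forced at the internal $4^+$ apex vertices of the $4$-cycles in the run, or by the bonus charge of the $6^+$- and $5$-faces forced to bound it. Once that is settled, combining the two cases for $b$ with $k\ge 1$ and substituting into $\mu^*(D)=4-d(D)+k+b$ (as is done after the lemma) gives $\mu^*(D)>0$ for every $d(D)\le 7$.
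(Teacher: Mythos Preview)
Your overall strategy coincides with the paper's: bound the number $w$ of $D$-edges lying on $4^-$-faces by $k$, so the remaining $d(D)-w$ edges lie on $5^+$-faces and (via Lemma~\ref{lemma:D1}(c),(d)) return at least $(d(D)-w)/3\ge (d(D)-k)/3$; for $k=1$ squeeze out an extra $1/3$. The paper phrases this as a segment decomposition of the non-triangle $D$-edges, bounding the number of single-edge segments on a $4$-face by $s'$, but the content is the same.

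Your $k\ge 2$ case is correct in outcome but needlessly hedged. There is no ``excess along runs'' to absorb: every $4$-face meeting $D$ meets it in exactly one edge (else Lemma~\ref{lemma:D1}(a) fails) and contributes two wings, each an $s'$-edge by Lemma~\ref{property1}(a); since each edge lies on two faces, every $s'$-edge is a wing of at most two $D$-touching $4$-faces. Double counting gives $w_4\le s'$ outright, and together with $w_3\le t_1+t_2$ you get $w\le k$ with no appeal to $\mathrm{Bon}$ or to apex vertices.

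Your $k=1$ case contains a genuine error. When $s'+t_1+t_2=1$, all vertices of $D$ except the one or two carrying edges of $E(D,V(G)-D)$ have degree $2$; consequently all $d(D)-w$ non-triangle $D$-edges lie on a \emph{single} inner face, not on ``distinct $5^+$-faces each meeting $D$ in a single edge''. Your justification ``else $s'\ge 2$'' does not hold: a face can meet $D$ in many consecutive edges without producing any new $s'$-edge, precisely because the intermediate $D$-vertices have degree $2$. The desired bound $b\ge d(D)/3$ is still true, but for the paper's reason: that single face has degree at least $d(D)+1\ge 6$ (resp.\ $d(D)+2$) when $s'=0$ (resp.\ $s'=1$), and by Lemma~\ref{lemma:D1}(d) it alone sends at least $\frac{(d(D)-1)+1}{3}=\frac{d(D)}{3}$ to $D$.
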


\begin{proof}
Since $V(G)\neq V(D)$, we have $E(D,V(G)-D)\neq \emptyset$, hence $k\geq 1$.

{\bf Case 1:} $k=1$. That is, $s'+t_1+t_2=1$. Let $s'=1$. Then $t_1=t_2=0$. Thus $D$ is adjacent to a $(d(D)+2)^{+}$-face that shares at least $d(D)$ edges with $D$.
Since $d(D)+2\geq 7$, by Lemma~\ref{lemma:D1} part (d), the $(d(D)+2)^{+}$-face can send at least $\frac{d(D)+1}{3}$
to $D$. Therefore, $b\geq \frac{d(D)+1}{3}>\frac{d(D)}{3}$. Now let $s'=0$. Then $t_1=1$ or $t_2=1$. Thus $D$ is adjacent to a $(d(D)+1)^{+}$-face that shares at least $d(D)-1$ edges with $D$.
Since $d(D)\geq 5$, $d(D)+1\geq 6$, by Lemma~\ref{lemma:D1} part(d), the $(d(D)+1)^{+}$-face can send at least $\frac{(d(D)-1)+1}{3}$ to $D$. Therefore, $b\geq \frac{d(D)}{3}$.

{\bf Case 2:} $k\geq 2$. Suppose that the non-internal $3$-faces meets $D$ by $p$ edges, the other edges of $D$ are divided into $q$ segments by the $s'$ edges and the $T_i$-subgraphs, and each segment has $d_i$ edges. Then we have
$p\leq t_1+t_2=k-s'$, and $d_1+d_2+\cdots +d_q= d(D)-p\geq d(D)-k+s'$.
If $d_i\geq 2$, then by Lemma~\ref{lemma:D1} part (a), these $d_i$ edges are in a $5^{+}$-face, by Lemma~\ref{lemma:D1} part (c) and (d), this $5^{+}$-face sends at least $\frac{d_i}{3}$ to $D$. If $d_i=1$, let the edge be $e_i=u_iv_i$. If $e_i$ is in a $5^{+}$-face, then by Lemma~\ref{lemma:D1} (c) and (d),
 this $5^{+}$-face sends at least $\frac{d_i}{3}$ to $D$. If $e_i$ is in a $4$-face,
 then $u_i$ and $v_i$ are both incident to some edges in $E(D,V(G)-D)$ that are not in any $3$-faces.

Suppose that there are $t$ such edges, each edge is in a $4$-face, then $t\leq s'$. Without loss of generality, let $d_1=d_2=\cdots=d_t=1$,  and $e_i$ is in a $4$-face,
 then we have $d_{t+1}+d_{t+2}+\cdots +d_q\geq d(D)-k+s'-t\geq d(D)-k$. For $t+1\leq i\leq q$, let $f_i$ be a $5^+$-face that shares $d_i$ edges with $D$, then each $f_i$ gives at least $\frac{d_i}{3}$ to $D$, hence,
$b\geq \frac{d_{t+1}}{3}+\frac{d_{t+2}}{3}+\cdots +\frac{d_q}{3}\geq \frac{d(D)-k}{3}$.
\end{proof}

Now, if $d(D)\in \{3,4\}$, since $k\geq 1$, $\mu^*(D)=4-d(D)+s-f_3+b>0$. So we assume that $d(D)\in \{5,6,7\}$. If $s-f_3=k=1$, we have
$\mu^*(D)=4-d(D)+s-f_3+b\ge4-d(D)+1+ \frac{d(D)}{3}
=5- \frac{2d(D)}{3}\ge 5- \frac{2}{3}\cdot 7>0$
If $s-f_3=k\geq 2$, we have
$\mu^*(D)=4-d(D)+s-f_3+b\ge4-d(D)+k+\frac{d(D)-k}{3}
\ge 4- \frac{2}{3}\cdot 7+ \frac{4}{3}>0$.

This completes the proof of Theorem~\ref{strong1}.

\begin{proof}[{\bf Proof of Theorem~\ref{main1} using Theorem~\ref{strong1}: }]
We may assume that $G$ contains a $3$-cycle $C$, for otherwise, $G$ is DP-$4$-colorable by Theorem~\ref{thm:no-3456-DP-coloring}. By Theorem~\ref{strong1}, any precoloring of $C$ can be extended to $G$, so $G$ is also DP-$4$-colorable.
\end{proof}

\section{Proof of Theorem~\ref{main2}}\label{proof2}

We prove the following theorem, which is stronger than Theorem~\ref{main2}. Recall that we use $\mathcal{G}_2$ to denote the class of planar graphs without $4$-cycles adjacent to $6$-cycles.

\begin{theorem}\label{strong2}
If $G\in\mathcal{G}_2$, then any pre-coloring of a good $8^-$-cycle can be extended to a DP-$4$-coloring of $G$.
\end{theorem}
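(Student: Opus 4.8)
The plan is to mirror the discharging argument used for Theorem~\ref{strong1}, applied to a minimal counterexample $(G,C_0)$ where $G\in\mathcal{G}_2$, $C_0$ is a good $8^-$-cycle pre-coloured by $\phi_0$ that cannot be extended, and $|V(G)|$ is minimum. By Lemma~\ref{c0-nonseparating} we may take $C_0$ to be the boundary of the outer face $D$, and Lemmas~\ref{minimum}, \ref{separating}, and \ref{reduce}(b) give the basic reducibility facts: every internal vertex has degree $\geq 4$, $G$ has no separating good $8^-$-cycle, and no internal $4$-vertex whose four neighbours avoid $D$ can have two opposite $4$-neighbours (subject to the no-bad-$4$-cycle and the ``identification stays in $\mathcal{G}_2$'' hypotheses, which must be verified using the structure of $\mathcal{G}_2$ from Lemma~\ref{property2} and the description of bad cycles in Lemma~\ref{good-cycle} / Figure~\ref{badcycle}). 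First I would assign the same initial charge $\mu(x)=d(x)-4$ for $x\neq D$ and $\mu(D)=d(D)+4$, so that $\sum\mu(x)=0$ by Euler's formula, and aim to show $\mu^*(x)\geq 0$ for all $x\neq D$ and $\mu^*(D)>0$ after discharging.

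The discharging rules should be close to (R1)--(R5) but recalibrated for $\mathcal{G}_2$: in this class a $3$-face may be adjacent to a $4$-face, and by Lemma~\ref{property2} one can have $T_i$-subgraphs up to $i=4$ (with the $T_4$ forced to be $W_4$), so the ``donor'' structure is different — one no longer has the luxury that a triangle in a diamond always sits next to a $6^+$-face. I would let internal $5^+$-vertices distribute their surplus evenly to incident $3$-faces (using Lemma~\ref{property2}(c) to guarantee each incident $3$-face gets at least $\frac13$, and at least $\frac12$ unless $v$ is a $5$-vertex on three $3$-faces), let $5$-faces and $7^+$-faces feed adjacent internal $3$-faces (note $6$-faces are now essentially ``neutral'' witnesses rather than donors, since a $4$-cycle adjacent to a $6$-cycle is forbidden — this actually helps, as it constrains what can sit next to a $4$-face), and have $D$ collect $\mu(v)$ from incident vertices, pay $1$ to each non-internal $3$-face, and receive all surplus from other faces. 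The face check splits as before on $d(f)\in\{3,4,5,6,7^+\}$: $4$-faces are untouched; $5$-faces need the opposite-$4$-vertex restriction from Lemma~\ref{reduce}(b) to bound the number of adjacent $(4,4,4)$-faces; $6$-faces need the $\mathcal{G}_2$ adjacency structure (a $6$-face cannot be adjacent to a $4$-cycle, and by Lemma~\ref{property2}(b) its edges on $T_i$-subgraphs are controlled); $7^+$-faces are comfortable by a $\frac23 d(f)-4\geq 0$ type estimate; and internal $3$-faces are analysed via the local picture of $f_1,f_2,f_3$, now using the $T_i$-classification of Lemma~\ref{property2}(a),(b) in place of Lemma~\ref{property1}(b),(c), with separate treatment of triangles inside $T_2$, $T_3$, and $W_4$ configurations.

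The main obstacle, as in the first proof, is the final analysis of $\mu^*(D)$, and here it is genuinely harder: the outer cycle can now be an $8$-cycle, and the structure of $T_i$-subgraphs meeting $D$ is richer (a $T_i$ can share more than one edge with $D$ in principle, so the analogue of Lemma~\ref{lemma:D1}(b) must be re-derived, and a version of Lemma~\ref{lemma:D1}(c),(d) bounding how much a $5$-face or $7^+$-face on $k$ edges of $D$ sends to $D$ must be proved afresh). I would first establish that $D$ is chordless and that two non-adjacent vertices of $D$ have no common internal neighbour — this uses Lemma~\ref{separating} together with the forbidden configuration (a common neighbour with all edges on $D$ would create a $4$-cycle adjacent to a $6$-cycle when $d(D)$ is large enough, and otherwise forces a separating good $8^-$-cycle). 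Then, writing $s$ for $|E(D,V(G)-D)|$, $f_3$ for the number of non-internal $3$-faces, and decomposing the edges of $D$ into the edges used by non-internal triangles, the $s'$ ``bare'' edges to the interior, and the maximal path-segments feeding $5^+$-faces, I would run the same bookkeeping $\mu^*(D)\geq 4-d(D)+s'+f_3'+b$ and a companion lemma bounding $b$ from below by roughly $\frac{d(D)-k}{3}$ (with $k=s-f_3\geq 1$). Plugging in $d(D)\leq 8$ should then close every case with room to spare; the delicate point is simply getting the $5$-face/$7^+$-face ``$D$-donation'' lemma exactly right given that $6$-faces no longer participate and that $T_3,T_4$ configurations touching $D$ must be ruled out or handled. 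Once that lemma is in hand, the case split on $d(D)\in\{3,4,\dots,8\}$ is routine arithmetic.

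\begin{proof}[Sketch]
Omitted here; see the discharging argument above, carried out in detail in Section~\ref{proof2}.
\end{proof}
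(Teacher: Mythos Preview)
Your overall framework is right, but the proposed discharging scheme has a genuine structural gap: in $\mathcal{G}_2$ you cannot treat $4$-faces as ``untouched'' and $6$-faces as ``neutral''. Unlike $\mathcal{G}_1$, a $3$-face in $\mathcal{G}_2$ \emph{can} be adjacent to a $4$-face (a $3$-cycle sharing an edge with a $4$-cycle creates no $6$-cycle), and a $6$-face \emph{can} be adjacent to a $3$-face (no $4$-cycle appears). So an internal $3$-face in a $T_1$ may be surrounded by $4$-faces and $6$-faces, and under your rules it receives nothing from them and dies. The paper fixes this by making both $4$-faces and $6$-faces donors (each gives $\tfrac13$ per adjacent $3$-face), and then has $4$-faces \emph{receive} $\tfrac16$ from adjacent $5$-faces and $\tfrac{3}{7}$ from adjacent $7^+$-faces so that a $4$-face (adjacent to at most one $3$-face and at least two $5^+$-faces) ends at $0$.

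This forces the remaining rules to diverge much more from Section~\ref{proof1} than you anticipate. The $7^+$-faces must give at rate $\tfrac{3}{7}$ per edge (not $\tfrac13$), with boosts to $\tfrac{9}{14}$ and $\tfrac{6}{7}$ through edges meeting ``bad'' $4$-vertices (a $4$-vertex on two adjacent $3$-faces); correspondingly $D$ pays only $\tfrac{5}{7}$ (not $1$) per non-internal $3$-face. The $5$-vertex rule also splits into good/bad cases ($\tfrac12$ vs.\ $\tfrac14,\tfrac38,\tfrac38$). Two further reducible configurations you do not mention are essential: two internal $(4,4,4)$-faces cannot share an edge (Lemma~\ref{4444}), and a $5$-face adjacent to an internal $(4,4,4)$-face forces a neighbour of the shared $4$-vertex onto $D$ (Lemma~\ref{corollary}); these, together with a notion of ``special'' $4^-$-faces, are what make the $5$-face check close. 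Finally, the $3$-face analysis is done by summing over the whole $T_i$ ($i\le 4$) rather than face by face, and the $D$-bookkeeping uses $b\ge \tfrac13(d(D)-3f_3'-s')$ together with per-face donation bounds of $\tfrac16 k$, $\tfrac13 k$, $\tfrac{3}{7}k$ for $5$-, $6$-, $7^+$-faces. Your ``mirror with recalibration'' plan would need all of these pieces to go through.
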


Let $(G,C_0)$ be a minimal counterexample to Theorem~\ref{strong1}.  That is, $G \in \mathcal{G}_2$, $C_0$ is a good $8^-$-cycle in $G$ that is pre-colored with a $DP$-$4$-coloring $\phi_0$ that can not be extended to a $DP$-$4$-coloring of $G$, and $|V(G)|$ is as small as possible. Consider a planar embedding of $G$. By Lemma~\ref{c0-nonseparating}, we may assume that $C_0$ is the boundary of the outer face $D$ of $G$.

\begin{lemma}\label{chordless}
$D$ has no chord, and moreover, if $v_1,v_2\in V(D)$ and $v_1v_2\notin E(G)$, then $v_1$ and $v_2$ have no common neighbors in int(D).
\end{lemma}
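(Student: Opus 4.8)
The claim is the exact analogue, for $G\in\mathcal{G}_2$, of part (a) of Lemma~\ref{lemma:D1}; the plan is to prove it for the minimal counterexample $(G,C_0)$ to Theorem~\ref{strong2} by contradiction, using Lemma~\ref{minimum} (internal vertices have degree $\ge 4$), Lemma~\ref{separating} (no separating good $8^-$-cycle), Lemma~\ref{good-cycle} (a bad $8^-$-cycle is a $4$-cycle or an $8$-cycle of the shape in Figure~\ref{badcycle}), and --- decisively --- the hypothesis that $C_0$ is \emph{good}. Throughout, $D$ is the outer face with boundary $C_0$.

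For the first assertion, suppose $D$ has a chord $xy$. Together with the two $x$--$y$ arcs of $C_0$, the chord forms cycles $C_1$ and $C_2$ with $|C_1|+|C_2|=d(D)+2\le 10$ and $|C_i|\ge 3$, so each $C_i$ is a $7^-$-cycle, and both arcs have length $\ge 2$. If some $C_i$ is bad then, by Lemma~\ref{good-cycle} and $|C_i|\le 7$, it is a bad $4$-cycle, i.e.\ a $4$-cycle with a $4^+$-vertex $u$ joined to all four of its vertices; since the chord $xy$ separates the two interior arc-vertices of $C_i$ from the rest of the disc bounded by $C_0$, planarity forces $u\in int(C_i)\subseteq int(D)$, so $u$ is an internal vertex with four neighbours on $C_i\subseteq C_0$, i.e.\ $C_0$ is bad --- a contradiction. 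Hence $C_1$ and $C_2$ are good $8^-$-cycles, so by Lemma~\ref{separating} neither is separating; and each $C_i$ has in its exterior an interior vertex of the other arc. Therefore $int(C_1)=int(C_2)=\emptyset$, whence $V(G)=V(D)$ and the pre-colouring $\phi_0$ is already a DP-$4$-colouring of $G$, a contradiction.

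For the second assertion, suppose $v_1,v_2\in V(D)$ with $v_1v_2\notin E(G)$ have a common neighbour $u\in int(D)$; by Lemma~\ref{minimum}, $d(u)\ge 4$. If every neighbour of $u$ lies on $D$, then $u$ is an internal $4^+$-vertex with at least four neighbours on $C_0$, so $C_0$ is bad --- a contradiction. Hence $u$ has a neighbour $w\in int(D)$. The path $v_1uv_2$ together with $C_0$ cuts the disc bounded by $C_0$ into two regions bounded by cycles $C_1'$ and $C_2'$, where $C_i'$ is $v_1uv_2$ plus one $v_1$--$v_2$ arc of $C_0$; since $v_1v_2\notin E(G)$ both arcs have length $\ge 2$, so $|C_i'|\le d(D)\le 8$. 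Now $w$ lies strictly inside one of these cycles, say $C_1'$, which is then a separating $8^-$-cycle. If $C_1'$ is good, Lemma~\ref{separating} gives a contradiction. If $C_1'$ is bad it is, by Lemma~\ref{good-cycle}, a bad $4$-cycle or a bad $8$-cycle from Figure~\ref{badcycle}; since $V(C_1')$ contains only the one vertex $u$ off $C_0$ and (by the first assertion) $D$ is chordless, the hub of the bad configuration cannot lie on $C_0$, and tracing it through the embedding yields either an internal $4^+$-vertex with four neighbours on $C_0$ (so $C_0$ is bad) or a $4$-cycle sharing an edge with the $6^-$-face $D$ --- which, when $d(D)=6$, is a $4$-cycle adjacent to a $6$-cycle; the few remaining low-degree pictures are ruled out by direct inspection, as in the proof of Lemma~\ref{lemma:D1}(a). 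In every branch we reach a contradiction.

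The step I expect to be the main obstacle is the bad-cycle subcase of the second assertion. In $\mathcal{G}_1$ every $7^-$-cycle is good (Lemma~\ref{good-cycle}), so in Lemma~\ref{lemma:D1}(a) merely producing a short separating cycle is already a contradiction; in $\mathcal{G}_2$ the short separating cycle we produce may a priori be one of the bad configurations of Figure~\ref{badcycle}, and eliminating these requires the extra bookkeeping above --- locating the hub vertex, using that $D$ is chordless and that $C_0$ is good, and invoking the absence of $4$-cycles adjacent to $6$-cycles. The remaining ingredients (the arc-length counting, the planarity arguments, and the trivial extension when $V(G)=V(D)$) I expect to be routine.
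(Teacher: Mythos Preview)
Your argument for the first assertion is correct and in fact tidier than the paper's: the paper does a case split on $d(D)\in\{4,5,6,7,8\}$, invoking the $\mathcal{G}_2$ hypothesis in each case to see which pairs $(|C_1|,|C_2|)$ the chord can produce, whereas you observe uniformly that $|C_i|\le 7$, that a bad $7^-$-cycle must be a bad $4$-cycle (Lemma~\ref{good-cycle}), and that its hub would witness badness of $C_0$.

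The second assertion is where the gap lies. Your plan is to take the cycle $C_1'$ containing $u$'s internal neighbour $w$ and, if $C_1'$ happens to be bad, argue that the hub ``yields either an internal $4^+$-vertex with four neighbours on $C_0$ \ldots\ or a $4$-cycle sharing an edge with the $6^-$-face $D$''. But consider the case $|C_1'|=4$ (arc of length $2$, say $v_1av_2$) with $d(D)=8$: the hub $h$ of the bad $4$-cycle $C_1'=v_1uv_2a$ is adjacent to $v_1,u,v_2,a$, which is only \emph{three} vertices of $C_0$, so $C_0$ is not forced bad; and the $4$-cycle $C_1'$ shares edges only with the $8$-cycle $C_0$, not a $6$-cycle. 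Your fallback ``direct inspection, as in the proof of Lemma~\ref{lemma:D1}(a)'' does not help, because in $\mathcal{G}_1$ every $7^-$-cycle is good and no such inspection ever occurs there. The analogous difficulty arises when $|C_1'|=8$.

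The paper sidesteps this by a different case split and a different cycle. It splits on the length of the \emph{shorter} $v_1$--$v_2$ arc $P$. If $v(P)\ge 4$ then both $C_1'$ and $C_2'$ are $5$- to $7$-cycles, hence automatically good, and one is separating. If $v(P)=3$, say $P=v_1wv_2$, the paper does \emph{not} work with the $4$-cycle $v_1uv_2w$; instead it looks at the rerouted cycle $D-w+u$ (your $C_2'$), which has length $d(D)$. The only way this can be bad is as a bad $8$-cycle, and the paper rules that out by observing that the already-present $4$-cycle $v_1uv_2w$, which shares the two consecutive edges $v_1u,uv_2$ with $D-w+u$, would then be adjacent to a $6$-cycle coming from the bad-$8$-cycle configuration of Figure~\ref{badcycle}. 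This ``switch to the long-arc cycle'' is the missing idea in your proposal.
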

\begin{proof}
Suppose otherwise that $D$ has a chord. Then $|D|\ge4$. If $|D|=4$, then $V(G)=V(D)$ since $G$ contains no separating $3$-cycles by Lemma~\ref{separating}. If $|D|=5$, then the $4$-cycle created by the chord can not be a bad $4$-cycle since $D$ itself is not a bad cycle.
By Lemma~\ref{separating}, $G$ contains no separating good $4$-cycles, and hence, $V(G) = V(D)$.

If $|D|=6$, then since $G \in \mathcal{G}_2$, the chord must form a $3$-cycle and a $5$-cycle together with edges on $D$.  By Lemma~\ref{separating}, $G$ contains no separating $5$-cycles, and therefore, $V(G)=V(D)$.

If $|D|=7$, then the chord will form either a $3$-cycle and a $6$-cycle or a $4$-cycle and a $5$-cycle with edges of $D$. Note that, if a $4$-cycle is created by the chord, then it can not be a bad $4$-cycle since $D$ itself is a good cycle.  We again get that $V(G)=V(D)$ since $G$ contains no separating good $6^-$-cycles by Lemma~\ref{separating}.

If $|D|=8$, then since $G \in \mathcal{G}_2$, the chord will form either a $3$-cycle and a $7$-cycle or two $5$-cycles with edges of $D$.  So $V(G)=V(D)$ since $G$ contains no separating good  $7^-$-cycles by Lemma~\ref{separating}.

Now assume that $v_1$ and $v_2$ are two non-adjacent vertices on $D$ that have a common neighbor $u \in int(D)$. Then by Lemma~\ref{minimum} $d(u)\ge 4$. Let $P$ be the shorter path between $v_1$ and $v_2$ on $D$. Note that $v(P)\ge3$. If $v(P)\ge4$, then $P+u$ or $D-P+\{v_1,v_2,u\}$ is a separating $7^-$-cycle since $G$ contains no $4$-cycles adjacent to $6$-cycles, contrary to Lemma~\ref{separating}. So we may assume that $v(P)=3$ and $P=v_1wv_3$. By Lemma~\ref{separating}, $v_1uv_2w$ is either a bad $4$-cycle or a $4$-face. In both cases, $D-w+u$ cannot be a bad $8$-cycle since $G$ contains no $4$-cycles adjacent to $6$-cycles. So $D-w+u$ is a separating good $8$-cycle, contrary to Lemma~\ref{separating}.
\end{proof}

By Lemma~\ref{good-cycle} and ~\ref{separating}, $G$ has no separating $3$-cycle. So every $6$-face in $G$ is bounded by a $6$-cycle. We require the following structural results on $5^{-}$-faces of $G$.

\begin{lemma}\label{4444}
Two internal $(4,4,4)$-faces cannot share exactly one common edge in $G$.
\end{lemma}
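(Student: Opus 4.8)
\textbf{Proof proposal for Lemma~\ref{4444}.}

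The plan is to argue by contradiction using the minimality of $(G,C_0)$, mimicking the identification technique of Lemma~\ref{reduce}(b). Suppose two internal $(4,4,4)$-faces $f_1 = [xyz]$ and $f_2 = [xyw]$ share exactly the edge $xy$. Then $x$ and $y$ are $4$-vertices, and $z, w$ are $4$-vertices as well. Since $f_1$ and $f_2$ are internal, none of $x,y,z,w$ lies on $D$. The idea is to delete a small set of these vertices and identify two of the remaining neighbors, obtaining a smaller graph still in $\mathcal{G}_2$ to which $\phi_0$ extends, then pull the coloring back.

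First I would set up the local picture: let $N(x) = \{y, z, w, a\}$ and $N(y) = \{x, z, w, b\}$ in cyclic order, where $a, b$ are the fourth neighbors of $x$ and $y$ (possibly $a = b$, but one should check whether the $4$-cycle/$5$-cycle restrictions force $a \ne b$; since $xzyw$ is a $4$-cycle, a $6$-cycle through $a$ would be forbidden, which constrains the configuration). Using Lemma~\ref{straight1} on the tree formed by $xz$, $xw$, $xa$ (or an appropriate small tree), rename lists to make the relevant matchings straight. Then delete $x$ and $y$ and identify two vertices among $\{z, w, a, b\}$ whose identification is safe — the natural candidates are $z$ and $w$, which are the two ``corner'' vertices opposite the shared edge. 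One must verify: (i) identifying $z$ and $w$ in $G - \{x,y\}$ creates no loop (i.e. $zw \notin E(G)$, which would otherwise make $xyzw$-type configurations; here $z$ and $w$ are both adjacent to $x$ and $y$, so if $zw \in E(G)$ then $\{x,z,w\}$ and $\{y,z,w\}$ are $3$-faces and we would have a $T_i$-subgraph — one should rule this out via Lemma~\ref{property2} and the fact that $G$ has no separating $3$-cycles); (ii) the identification creates no parallel edges, i.e. $z$ and $w$ have no common neighbor besides $x$ and $y$, which follows from $G$ having no separating good $4^-$-cycles (Lemma~\ref{separating}) together with the bad-cycle classification in Lemma~\ref{good-cycle}; (iii) the resulting graph $G'$ has no $4$-cycle adjacent to a $6$-cycle — this is the delicate bookkeeping step, since contracting can shorten cycles, and one needs the local structure (the two triangles and the forbidden $6$-cycles around them) to show no bad new configuration appears. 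Then $|V(G')| < |V(G)|$, so $\phi_0$ extends to a $DP$-$4$-coloring $\phi$ of $G'$; set $L^*(x), L^*(y)$ by removing colors forbidden by already-colored neighbors, observe $|L^*(x)| \ge 4 - |\{z,w,a\}| \ge 1$ and similarly for $y$ (using that $z$ and $w$ receive the same ``identified'' color, which is why the identification helps), color $z, w$ with that common color, then color $x$ and $y$ greedily, a contradiction.

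The main obstacle I anticipate is step (iii): verifying that the contracted graph $G'$ still lies in $\mathcal{G}_2$. Unlike the $4$-cycle case in Lemma~\ref{reduce}(b) where one can simply hypothesize this as part of the statement, here the lemma is unconditional, so the absence of $4$-cycles adjacent to $6$-cycles in $G'$ must be deduced purely from the hypothesis that $f_1, f_2$ are adjacent $(4,4,4)$-faces in $G \in \mathcal{G}_2$. Concretely, identifying $z$ and $w$ can only create a new short cycle through a path $z \cdots w$ in $G - \{x,y\}$; such a path has length $\ge 2$ (else $zw \in E(G)$, excluded above), and combined with the new identified vertex it yields cycles that, if short enough to be troublesome, would correspond to short cycles or near-triangulated regions in $G$ already violating the $\mathcal{G}_2$ condition or creating a separating small cycle excluded by Lemma~\ref{separating}. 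Carefully enumerating the ways a $4$-cycle-adjacent-to-$6$-cycle could arise in $G'$ and tracing each back to a forbidden configuration in $G$ (using that $G$ has no separating good $8^-$-cycles and the precise shape of bad cycles from Figure~\ref{badcycle}) is where the real work lies; the rest is the now-standard list-size count.
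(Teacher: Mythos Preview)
Your identification approach is substantially harder than necessary, and the obstacle you correctly flag in step~(iii) --- which is real and would require genuine case analysis --- is entirely avoidable. The paper's proof uses no identification at all: it simply deletes the set $S = \{x, y, z, w\}$, extends $\phi_0$ to $G - S$ by minimality (here $G - S$ is trivially still in $\mathcal{G}_2$ and $D$ is untouched), and then colors $S$ greedily. Since each of $x$ and $y$ has three of its four neighbors inside $S$ (namely the other three vertices of the two triangles), $|L^*(x)|, |L^*(y)| \ge 3$; since each of $z$ and $w$ has two neighbors in $S$, $|L^*(z)|, |L^*(w)| \ge 2$. Now pick a color for $y$ that is not matched under $M_{yz}$ to either of the at most two colors in $L^*(z)$ --- possible because $|L^*(y)| \ge 3$ --- so that $z$ still has at least two available colors; then color $w$, $x$, $z$ in that order, each retaining at least one available color at its turn. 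No contraction, no need to verify that a modified graph remains in $\mathcal{G}_2$.

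Your route might be completable, but beyond step~(iii) there is a further point you do not address: for $\phi_0$ to remain a valid precoloring of $D$ in the contracted graph, the identification of $z$ and $w$ must not create a chord of $D$. The hypothesis that both faces are internal gives $z, w \notin V(D)$, but says nothing about whether $z$ and $w$ have neighbors on $D$; compare the explicit hypothesis $N(v)\cap D=\emptyset$ in Lemma~\ref{reduce}, which is absent here. The four-vertex deletion argument sidesteps all of this.
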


\begin{proof}
Suppose to the contrary that $T_1=uvx$ and $T_2=uvy$ share a common edge $uv$. Let $S=\{u,v,x,y\}$ and $G'=G-S$. For each $v\in V(G')$, let $L'(v)=L(v)$ and let $H'=H\setminus\{L(w): w\in S\}$. By the minimality of $G$, the graph $G'$ has an $DP$-$4$-coloring. Thus there is an independent set $I'$ in $H$ with $|I'|=|V(G)|-4$.

For each $w\in S$, we define that $$L^*(w)=L(w)\setminus \{(w,k): (w,k)(u,k)\in E(H), u\in N(w) \text{ and } (u,k)\in I'\}.$$ Since $|L(v)|\ge4$ for all $v\in V(G)$, we have $$|L^*(u)|\ge3,\  |L^*(v)|\ge3,\  |L^*(x)|\ge2,\  |L^*(y)|\ge2.$$ So we can choose a vertex $(v,c)$ in $L^*(v)$ for $v$ such that $L^*(x)\setminus \{(v,c): (v,c)(x,c)\in E(H)\}$ has at least two available colors. Color $y,u,x$ in order, we can find an independent set $I^*$ with $|I^*|=4$. So $I'\cup I^*$ is an independent set of $H$ with $|I'\cup I^*|=|V(G)|$, a contradiction.
\end{proof}

\begin{lemma}\label{corollary}
Let $f=v_1v_2v_3v_4v_5$ be an internal $5$-face that is adjacent to an internal $(4,4,4)$-face $v_1v_2v_{12}$. If $v_3$ is a $4$-vertex, then $v_2$ has a neighbor on $D$. Consequently, (i) neither $v_1v_5$ nor $v_2v_3$ is on an internal $(4,4,4)$-face; (ii) If $v_3v_4$ is on an internal $(4,4,4)$-face, then each of $v_2$ and $v_3$ has a neighbor on $D$.
\end{lemma}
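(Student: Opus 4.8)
The plan is to argue by contradiction using the minimality of $(G, C_0)$ and the identification technique already developed in Lemma~\ref{reduce}(b). Suppose $f = v_1v_2v_3v_4v_5$ is an internal $5$-face adjacent to the internal $(4,4,4)$-face $v_1v_2v_{12}$, that $v_3$ is a $4$-vertex, but that $v_2$ has no neighbor on $D$. I want to set up an identification that contradicts minimality. Since $f$ is internal, $v_1, v_2, v_3 \notin V(D)$; since $v_1, v_2, v_{12}$ form a $(4,4,4)$-face, $v_2$ is a $4$-vertex with neighbors $v_1, v_3, v_{12}$ and one more neighbor, and likewise $v_1$ and $v_{12}$ are $4$-vertices. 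Because $v_3$ is a $4$-vertex and $v_2v_3 \in E(f)$, I would like to apply Lemma~\ref{reduce}(b) (or an analogous identification) at the $4$-vertex $v_2$, identifying the two ``opposite'' neighbors of $v_2$ that are not on $f$'s triangle side — concretely, identifying $v_1$ (or $v_{12}$) with $v_3$ in $G$ minus a suitable small vertex set. The hypotheses of Lemma~\ref{reduce}(b) to verify are: $\{v_1, v_3\} \cap D = \emptyset$ (which holds since $v_2$ has no neighbor on $D$, forcing $v_1, v_3 \notin V(D)$, together with the fact that $f$ is internal), that $v_1, v_2, v_3$ do not lie on a bad $4$-cycle, and that the identified graph has no $4$-cycles adjacent to $6$-cycles.

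First I would check that no bad $4$-cycle contains $\{v_1, v_2, v_3\}$: a bad $4$-cycle is one of the configurations in Figure~\ref{badcycle}, and the presence of the $(4,4,4)$-triangle $v_1v_2v_{12}$ plus the $5$-face $f$ on the other side of edge $v_1v_2$ restricts the local structure enough to rule this out (in particular $G$ has no separating $3$-cycles or separating good $4$-cycles by Lemma~\ref{separating}, so $v_1v_2v_3$ sits on no short separating cycle). Next I would verify that identifying $v_1$ and $v_3$ in $G - \{v_2, v, v_4\}$-type subgraph creates no new $4$-cycle adjacent to a $6$-cycle: the new short cycles through the identified vertex come from paths of length $3$ or $4$ between $v_1$ and $v_3$ in the reduced graph, and such a short cycle, together with an adjacent $6$-cycle, would pull back to a $4$-cycle adjacent to a $6$-cycle in $G$ (or a separating short cycle contradicting Lemma~\ref{separating}) — this uses that $G \in \mathcal{G}_2$ and that $C_0$ is a good $8^-$-cycle. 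With both hypotheses confirmed, Lemma~\ref{reduce}(b) applied at $v_2$ gives that at most one of the relevant opposite neighbors is a $4$-vertex; but $v_1$ and $v_3$ are both $4$-vertices, a contradiction. This proves the main assertion.

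For the consequences: claim (i) is immediate. If $v_1v_5$ were on an internal $(4,4,4)$-face, then $v_5$ would be a $4$-vertex and, applying the just-proved statement with the roles of the triangle edge and of $v_3$ played by $v_1v_5$ and $v_4$ appropriately (or simply re-reading the statement with $f$ relabelled so that $v_1v_5$ is the triangle-adjacent edge), I would get that $v_1$ has a neighbor on $D$ — but $v_1$ is a $4$-vertex whose neighbors include $v_2, v_5, v_{12}$, and an argument parallel to the one above (or directly: $v_1$ internal with all short cycles through it controlled) shows this forces a contradiction with $v_2$ having no neighbor on $D$ and with the minimality setup. The case $v_2v_3$ on an internal $(4,4,4)$-face is symmetric: it would make $v_3$ part of a new triangle, and re-applying the main statement around vertex $v_2$ or $v_3$ yields the same contradiction. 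For claim (ii), if $v_3v_4$ lies on an internal $(4,4,4)$-face then $v_4$ is a $4$-vertex; applying the main statement to the $5$-face $f$ with the triangle on edge $v_3v_4$ and the ``$v_3$-role'' vertex being $v_2$ (and separately $v_5$) forces $v_3$ to have a neighbor on $D$, and the original hypothesis already forces $v_2$ to have a neighbor on $D$ — wait, more precisely, applying the main statement in this new configuration gives that the appropriate middle vertex has a $D$-neighbor, yielding that both $v_2$ and $v_3$ have neighbors on $D$.

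The main obstacle I anticipate is the careful case analysis needed to show that the vertex identification introduces no $4$-cycle adjacent to a $6$-cycle and no forbidden parallel edges or loops — this is where one must use the structure of $\mathcal{G}_2$, the absence of separating good short cycles (Lemma~\ref{separating}), and the classification of bad cycles (Lemma~\ref{good-cycle}, Figure~\ref{badcycle}) in combination, and it is easy to overlook a configuration. A secondary subtlety is bookkeeping in the consequences: making sure that when I ``relabel $f$'' to apply the main statement to a different triangle-adjacent edge, the hypothesis ``the middle vertex has no neighbor on $D$'' is actually the negation I want to contradict, so that the logical direction of each application is correct.
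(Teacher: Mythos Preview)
Your overall strategy---apply Lemma~\ref{reduce}(b) at the $4$-vertex $v_2$---matches the paper, but you have misidentified the pair of neighbours to merge, and this is a genuine gap. The four neighbours of $v_2$, in cyclic order in the plane, are $v_{12}, v_1, v_3, u$, where $u$ is the fourth neighbour you never name. Thus the two \emph{opposite} pairs are $\{v_1,u\}$ and $\{v_{12},v_3\}$; the pair $\{v_1,v_3\}$ you propose to identify is \emph{consecutive}, so Lemma~\ref{reduce}(b) does not even apply to it. Moreover, in Lemma~\ref{reduce}(b) the conclusion ``at most one is a $4$-vertex'' concerns the \emph{deleted} opposite pair, not the identified one. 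The paper identifies $v_1$ with $u$ in $G-\{v_{12},v_2,v_3\}$ and concludes that at most one of $v_{12},v_3$ is a $4$-vertex; since $v_{12}$ is a $4$-vertex (it lies on the $(4,4,4)$-face) and $v_3$ is a $4$-vertex by hypothesis, this is the contradiction. Your aim of contradicting ``$v_1$ and $v_3$ are both $4$-vertices'' cannot be reached via Lemma~\ref{reduce}(b) at $v_2$ with any legal choice of opposite pair.

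A second gap is in the verification that the identified graph stays in $\mathcal{G}_2$. With the correct identification (of $v_1$ and $u$), a new $4$- or $6$-cycle arises from a $(v_1,u)$-path of length $4$ or $6$ in $G-\{v_{12},v_2,v_3\}$; together with $v_1v_2,v_2u$ this gives a separating $6$- or $8$-cycle of $G$. The $6$-cycle case is immediately excluded by Lemma~\ref{separating}, but the $8$-cycle case is not: Lemma~\ref{separating} only rules out \emph{good} separating $8^-$-cycles, so you must argue that if the $8$-cycle is bad then (using the classification in Figure~\ref{badcycle} and the presence of the triangle $v_1v_2v_{12}$) every edge of it lies on a $6$-cycle of $G$, whence the new $6$-cycle cannot be adjacent to any $4$-cycle. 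Your sketch (``paths of length $3$ or $4$'') both uses the wrong endpoints and omits this bad-$8$-cycle analysis. Your derivation of consequences (i) and (ii) by relabelling $f$ is essentially correct once the main assertion is fixed.
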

\begin{proof}
Suppose that $v_3$ is a $4$-vertex and $v_2$ has no neighbor on $D$. Let $N(v_2)=\{v_1,v_{12},u,v_3\}$. First we show that there is no bad $4$-cycle contains $v_{1}$, $v_2$, and $u$. Suppose otherwise that $C$ is a bad $4$-cycle with $\{v_1, v_2, u\} \subset V(C)$. Since $G \in \mathcal{G}_2$, the vertex $v_1$ is not adjacent to $u$. Therefore, the cycle $C$ uses the edges $v_1v_2$ and $v_2u$. Note that $v_{12}$ and $v_3$ are the only neighbors of $v_2$ not on $C$. Since $C$ is a bad cycle, either $v_{12}$ or $u$ is adjacent to every vertex on $C$. In both cases, $G$ would contain a $4$-cycle adjancent to a $6$-cycle; a contradiction.

Next we show that the graph $G'$ obtained by identifying $v_1$ and $u$ of $G-\{v_{12},v_2,v_3\}$ has no $4$-cycles adjacent to $6$-cycles. Suppose otherwise.  Then either a new $4$-cycle or a new $6$-cycle is created by the identification. So there exists a path $P$ of length $4$ or $6$ between $v_1$ and $u$ in $G-\{v_{12}, v_2, v_3\}$. Let $C$ be the cycle in $G$ formed by the path $P$ and the edges $v_1v_2$ and $v_2u$.

If $P$ has length $4$, then $C$ is a separating $6$-cycle of $G$, contrary to Lemma~\ref{separating}.


If $P$ has length $6$, then $C$ is a separating $8$-cycle. By Lemma~\ref{separating}, $C$ is a bad cycle. Since $G \in \mathcal{G}_2$, $G$ can not have a subgraph isomorphic to the fourth configuration in Figure ~\ref{badcycle}; moreover, if $G$ has a subgraph isomorphic to the third configuration in Figure ~\ref{badcycle}, then neither $v_1$ nor $v_2$ can be a vertex in one of the three triangles in Figure ~\ref{badcycle}. It follows that $G$ contains a separating $7$-cycle, a contradiction. Therefore, we may assume that $C$ is a boundary cycle of a subgraph isomorphic to the second configuration of Figure ~\ref{badcycle}. Note that every edge of $C$ is in a $6$-cycle of $G$, and hence, is not adjacent to a $4$-cycle. It follows that the new $6$-cycle created by the identification is not  adjacent to any $4$-cycle. So we conclude that $G'$ has no $4$-cycles adjacent to $6$-cycles.

By Lemma~\ref{reduce} (b), at most one of $v_{12}$ and $v_3$ is a $4$-vertex, a contradiction. Therefore, $u\in D$.
\end{proof}

We are now ready to complete the proof of Theorem~\ref{strong2} by a discharging procedure.  For each $x\in V\cup F\backslash D$, let $x$ have an initial charge of $\mu(x)=d(x)-4$, and $\mu(D)=d(D)+4$. By Euler's Formula, $\sum_{x\in V\cup F}\mu(x)=0$.

Let $\mu^*(x)$ be the charge of $x\in V\cup F$ after the discharging procedure. To lead to a contradiction, we shall prove that $\mu^*(x)\ge 0$ for all $x\in V\cup F$ and $\mu^*(D)>0$.

We call a $5$-vertex $v$ {\em{bad}} if $v$ is on three $3$-faces and exactly two of which are adjacent; otherwise, it is {\em{good}}. We call a $4$-vertex {\em{bad}} if it is on exactly two adjacent $3$-faces. It is easy to check that every $7^+$-face shares at most two bad $4$-vertices with an adjacent $3$-face.  We call a $4^-$-face $f_0$ {\em{special}} to a $5$-face $f$ if either $f_0$ is a $3$-face sharing two internal vertices with $f$ and exactly one vertex with $D$ or a $4$-face sharing two internal vertices with $f$ and two vertices with $D$ and $f_0$ is adjacent to no $3$-faces other than $D$.

The discharging rules:
All $3^+$-faces mentioned here are distinct from $D$.
\begin{enumerate}[(R1)]

\item Every internal $6^+$-vertex gives $\frac{1}{2}$ to each incident $3$-face, every good internal $5$-vertex distributes its charge evenly to incident $3$-faces, and every bad internal $5$-vertex gives $\frac{1}{4}$ to its incident isolated $3$-face and $\frac{3}{8}$ each to the other two incident $3$-faces.

\item Each $5$-face gives $\frac{1}{3}$ to every adjacent internal $(4,4,4)$-face, $\frac{1}{6}$ to every other adjacent non-special $3$-face or non-special $4$-face.

\item Each $4$- or $6$-face gives $\frac{1}{3}$ to each adjacent $3$-face.

\item Each $7^+$-face $f$ gives $\frac{6}{7}t$ to each adjacent $3$-face sharing two bad $4$-vertices with $f$,  $\frac{9}{14}t$ to each adjacent $3$-face sharing exactly one bad $4$-vertex with $f$ and $\frac{3}{7}t$ to each adjacent $4$-face or $3$-face not sharing a bad $4$-vertex with $f$, where $t$ is the number of common edges of $f$ and the $4^-$-face;

\item The outer-face $D$ gets $\mu(v)$ from each incident vertex and gives $\frac{5}{7}$ to each non-internal $3$-face.

\item After the above rules, each $5^+$-face gives its surplus charge to $D$.
\end{enumerate}

\textbf{Remark:} By (R4), we may consider that the charge $f$ gives to its adjacent $3$-faces is carried by edges of $f$. Then on average, every edge of $f$ carries a charge of at most $\frac{3}{7}$. We may consider it in the following way: Suppose that $f=v_1v_2v_3\ldots v_k$. Suppose that $v_1v_2$ is on a $3$-face such that $v_2$ is a bad $4$-vertex, then since $G \in \mathcal{G}_2$, every face that contains the edge $v_2v_3$ is a $7^+$-face. So the edge $v_2v_3$ carries a charge of $0$, we then let the edge $v_1v_2$ give a charge of $\frac{3}{14}$ to the edge $v_2v_3$. Repeat the same procedure for every bad $4$-vertex of $f$ that is in a $3$-face adjacent to $f$.

\medskip
We first check the final charge of vertices in $G$. Let $v$ be a vertex in $G$. If $v\in D$, then by (R6),  $\mu^*(v)\ge0$. If $v\not\in D$, then by Lemma~\ref{minimum} $d(v)\ge4$. If $d(v)=4$, then $v$ is not involved in the discharging procedure. So $\mu^*(v)=\mu(v)=d(v)-4=0$. If $d(v)\ge5$, then by (R1), when $v$ is a good $5$-vertex, $\mu^*(v)\ge 5-4-\frac{1}{2}\cdot2=0$;  when $v$ is a bad $5$-vertex, $\mu^*(v)\ge5-4-\frac{1}{4}-\frac{3}{8}\cdot2=0$; when $v$ is a $6^+$-vertex, $\mu^*(v)\ge d(v)-4-\frac{1}{2}(d(v)-2)=0$.

Now we check the final charge of faces other than $D$ in $G$. Let $f$ be a face in $G$. By the rules, we only need to show that after (R1)-(R5), $f$ has nonnegative charge, and we denote this charge by  $\mu'(f)$.
First assume that $d(f)=4$.  Then by Lemma~\ref{chordless}, $|V(f)\cap V(D)|\le2$. Then $f$ is adjacent to at least two $5^+$-faces other than $D$, and hence by (R2) and (R3), $\mu'(f)\ge4-4 +\frac{1}{6}\cdot 2-\frac{1}{3}=0$.

Next we assume that $d(f)=5$. Then by Lemma~\ref{corollary}, $f$ is adjacent to at most two internal $(4,4,4)$-faces. If $f$ is adjacent to at most one $(4,4,4)$-face, then by (R2),  $\mu'(f)\ge5-4-\frac{1}{3}-\frac{1}{6}\cdot 4=0$. So by Lemma~\ref{corollary}, we may assume that $f=v_1v_2v_3v_4v_5$ and each of $v_1v_2$ and $v_3v_4$ is on a $(4,4,4)$-face. Therefore, each of $v_2$ and $v_3$ has a neighbor on $D$. It is easy to check that if $v_2v_3$ is on a $4^-$-face $f_0$, then since $G \in \mathcal{G}_2$, $f_0$ is not adjacent to any $3$-faces. Therefore, the face $f_0$ is special to the $5$-face $f$; by (R2), $f_0$ gets no charge from $f$.  So $\mu'(f)\ge5-4-\frac{1}{3}\cdot2-\frac{1}{6}\cdot2=0$.

For a $6$-face $f$, it follows from (R4) that $\mu'(f)\ge6-4-\frac{1}{3}\cdot6=0$.

Assume that $d(f)\ge7$. By the Remark, $\mu'(f)\ge d(f)-4-\frac{3}{7}d(f)\ge0$.


Suppose that $d(f)=3$. By Lemma~\ref{property2}(a), $f$ must be in $T_i$-subgraph for $i \in \{1, 2, 3, 4\}$.  Define $\mu(T_i):=\mu(f_1)+\mu(f_2)+\ldots+\mu(f_i)=-i$
where $f_1$, $f_2\ldots f_i$ are $3$-faces in $T_i$
and define $\mu'(T_i):=\mu'(f_1)+\mu'(f_2)+\ldots+\mu'(f_i).$
Since we can redistribute all the charge received by $T_i$ evenly to each $3$-face contained in it, it suffices to show that $\mu'(T_i)\geq 0$. Let $f=uvw$ and let $f_1, f_2$ and $f_3$ be the faces sharing edges $uv,vw,uw$ with $f$ respectively.

\textbf{Case 1:} $f$ is in $T_1$.

By Lemma~\ref{chordless}, $|V(f)\cap V(D)|\le2$. If $|V(f)\cap V(D)|\ge1$, then $f$ gets $\frac{5}{7}$ from $D$, at least $\frac{1}{6}$ from each adjacent $4^+$-face other than $D$ by (R2)-(R5). So $\mu'(H_1)\ge-1+\frac{5}{7}+\frac{1}{6}\cdot2>0$. So we may assume that $f$ is an internal $3$-face, and hence, each of $u$, $v$, and $w$ is a $4^+$-vertex.
If $f$ is not adjacent to any $5$-faces, then each face adjacent to $f$ is either a $6^+$-face or a $4$-face that has at most two common vertices. So $f$ gets at least $\frac{1}{3}\cdot3$ from adjacent faces by (R3)(R4). So $\mu'(f)\ge0$. So we may assume that $f$ is adjacent to a $5$-face, by symmetry, say $f_1$ is a $5$-face. If $f$ is a $(4,4,4)$-face, then by (R2), (R3) and (R4),  $f$ gets $\frac{1}{3}\cdot3$ from adjacent faces. So $\mu'(f)\ge-1+1=0$. If $f$ is a $(4,4,5^+)$-face, then $f$ gets $\frac{1}{6}$ from $f_1$ by (R2). If $d(w)\ge6$ or $w$ is a good $5$-vertex, then $w$ gives $\frac{1}{2}$ to $f$ by (R1). Note that $f$ gets at least $\frac{1}{6}\cdot2$ from $f_2$ and $f_3$. So $\mu'(f)\ge-1+\frac{1}{6}\cdot3+\frac{1}{2}=0$. If $w$ is a bad $5$-vertex, then both $f_2$ and $f_3$ are $7^+$-faces. So by (R2) and (R4), $f$ gets at least $\frac{3}{7}\cdot2$ from each of $f_2$ and $f_3$;  and gets $\frac{1}{4}$ from $w$ by. So $\mu'(f)\ge-1+\frac{3}{7}\cdot2+\frac{1}{4}>0$. If $f$ is a $(4,5^+,4)$-face, then $v$ is either a $6^+$-vertex or a good $5$-vertex. So $f$ gets $\frac{1}{2}$ from $v$ by (R1). Note that $f$ gets at least $\frac{1}{6}\cdot3$ from adjacent faces. So $\mu'(f)\ge-1+\frac{1}{2}+\frac{1}{2}=0$. If $f$ is a $3$-face with at least two $5^+$-vertices, then $f$ gets at least $\frac{1}{4}\cdot2$ from the two $5^+$-vertices and $\frac{1}{6}\cdot3$ from adjacent faces. So $\mu'(f)\ge-1+\frac{1}{2}+\frac{1}{2}=0$.

Note that for $i\in\{2,3,4\}$, by Lemma~\ref{property2}(b), each boundary edge of $T_i$ is adjacent to a $7^+$-face.

\textbf{Case 2:} $f$ is in $T_2$.

Let $V(T_2)=\{u,v,x,y\}$ such that each of $v$ and $x$ is on two $3$-faces.
By Lemma~\ref{chordless}, $|V(T_2)\cap V(D)|\le3$. If $|V(T_2)\cap V(D)|=1$, then $T_2$ gets at least $\frac{5}{7}$ from $D$ and at least $\frac{3}{7}\cdot4$ from adjacent $7^+$-faces other than $D$. So $\mu'(T_2)\ge-2+\frac{5}{7}+\frac{12}{7}>0$.  If $|V(T_2)\cap V(D)|\in\{2,3\}$, then $T_2$ gets $\frac{5}{7}\cdot2$ from $D$ and at least $\frac{3}{7}\cdot2$ from adjacent $7^+$-faces other than $D$. So $\mu'(T_2)\ge-2+\frac{10}{7}+\frac{6}{7}>0$.  So we may assume that $|V(T_2)\cap V(D)|=0$.
If both $v$ and $x$ are $4$-vertices, then by (R4),  $T_2$ gets $\frac{9}{14}\cdot4$
from adjacent $7^+$-faces. If at least one of $v$ and $x$ is a $5^+$-vertex, then $T_2$ gets at least $\frac{3}{8}\cdot2$ from a $5^+$-vertex and $\frac{3}{7}\cdot4$ from adjacent $7^+$-faces. In either case, $\mu'(T_2)\ge-2+\min\{\frac{18}{7}, \frac{69}{28}\}>0$.

\textbf{Case 3:} $f$ is in $T_3$.

Let $u,v,w,x,y$ be the five vertices of $T_3$ in cyclic order such that $y$ is on three $3$-faces of $T_3$. Since $G$ has no separating $3$-cycles, the subgraph $T_3$ contains a pair of non-adjacent $3$-faces. 

First assume that $|V(T_3)\cap V(D)|\ge1$. If both $v$ and $w$ are on $D$, then by Lemma~\ref{chordless} $y$ is not on $D$. So each of $uy$ and $yx$ is on a $7^+$-face other than $D$. By (R4) and (R5),  $T_3$ gets $\frac{5}{7}\cdot3$ from $D$ and at least $\frac{3}{7}\cdot2$ from adjacent $7^+$-faces other than $D$. So $\mu'(T_3)\ge-3+\frac{15}{7}+\frac{6}{7}=0$. If exactly one of $v$ and $w$ is on $D$, say $v$, then $T_3$ gets at least $\frac{5}{7}\cdot2$ from $D$. By Lemma~\ref{chordless}, at most one of $u$ and $y$ is on $D$. So $T_3$ gets at least $\frac{3}{7}\cdot4$ from adjacent $7^+$-faces other than $D$. Therefore, $\mu'(T_3)\ge-3+\frac{10}{7}+\frac{12}{7}>0$. So we may assume that neither $v$ nor $w$ is on $D$. If $y$ is on $D$, then $T_3$ gets at least $\frac{5}{7}\cdot3$ from $D$ and $\frac{3}{7}\cdot3$ from adjacent $7^+$-faces other than $D$. Therefore,  $\mu'(T_3)\ge-3+\frac{15}{7}+\frac{9}{7}>0$. So we may assume that none of $v$, $w$, and $y$ is on $D$. By our assumption, one of $u$ and $x$ is on $D$. If both $u$ and $x$ are on $D$, then $T_3$ gets $\frac{5}{7}\cdot2$ from $D$ and $\frac{3}{7}\cdot5$ from adjacent $7^+$-faces other than $D$. So $\mu'(T_3)\ge-3+\frac{10}{7}+\frac{15}{7}>0$. If exactly one of $u$ and $x$ is on $D$, say $u$, then by Lemma~\ref{4444}, at least one vertex in $\{v,w,x,y\}$ is a $5^+$-vertex. So $T_3$ gets at least $\frac{1}{4}$ from the $5^+$-vertex by (R1). Note that by (R4) and (R5), $T_3$ gets $\frac{3}{7}\cdot5$ from adjacent $7^+$-faces other than $D$ and $\frac{5}{7}$ from $D$. So $\mu'(T_3)\ge-3+\frac{5}{7}+\frac{1}{4}+\frac{15}{7}>0$.

Next we assume that $V(H_3)\cap V(D)=\emptyset$.
If both $v$ and $w$ are $4$-vertices, then by (R4), $\mu'(T_3)\ge-3+\frac{9}{14}\cdot2+\frac{6}{7}+\frac{3}{7}\cdot2=0$. If exactly one of $v$ and $w$ is a $4$-vertex, then by (R1) and (R4), $\mu'(T_3)\ge-3+\frac{9}{14}\cdot2+\frac{3}{7}\cdot3+\frac{3}{8}\cdot 2>0$. If neither $v$ nor $w$ is a $4$-vertex, then by (R1) and (R4) again, $\mu'(T_3)\ge-3+\frac{3}{7}\cdot5+\frac{3}{8}\cdot 4>0$.

\textbf{Case 4:} $f$ is in $T_4$.

By Lemma~\ref{property2}(a), $T_4$ is isomorphic to the wheel graph $W_4$ with $4$ spokes.  Let $v$ be the center $4$-vertex and $N(v)=\{v_1,v_2,v_3,v_4\}$. Clearly $v \notin V(D)$ since $D$ is the outer face.

Suppose first that $|V(T_4)\cap V(D)|\ge1$. By Lemma~\ref{chordless}, $|V(T_4)\cap V(D)|\le3$. If $|V(T_4)\cap V(D)|=3$, say $v_4$ is not on $D$, then by (R5), $T_4$ gets $\frac{5}{7}\cdot4$ from $D$. If $d(v_4)=4$, then by (R4), $T_4$ gets at least $\frac{9}{14}\cdot2$ from adjacent $7^+$-faces other than $D$. Therefore, $\mu'(T_4)\ge-4+\frac{20}{7}+\frac{9}{7}>0$. If $d(v_4)\ge5$, then by (R1) and (R4), $T_4$ gets at least  $\frac{3}{7}\cdot2+\frac{3}{4}$ from adjacent $7^+$-faces and $v_4$. Therefore, $\mu'(T_4)\ge-4+\frac{20}{7}+ \frac{3}{7}\cdot2+\frac{3}{4}>0$. If $|V(T_4)\cap V(D)|=2$, then by symmetry either $v_1$ and $v_2$ are on $D$ or $v_1$ and $v_3$ are on $D$. In the former case, $T_4$ gets $\frac{5}{7}\cdot3$ from $D$. If both $v_3$ and $v_4$ are $4$-vertices, then by (R4), $T_4$ gets at least $\frac{9}{14}\cdot2+\frac{6}{7}$ from adjacent $7^+$-faces. So $\mu'(T_4)\ge-4+\frac{15}{7}+\frac{15}{7}>0$. If at least one of $v_3$ and $v_4$ is $5^+$-vertex, then by (R1) and (R4), $T_4$ gets at least $\frac{3}{7}\cdot3+\frac{3}{4}$ from a $5^+$-vertex and adjacent $7^+$-faces. So $\mu'(T_4)\ge-4+\frac{15}{7}+\frac{9}{7}+\frac{3}{4}>0$. In the latter case, $T_4$ gets $\frac{5}{7}\cdot4$ from $D$ and at least $\frac{3}{7}\cdot4$ from adjacent $7^+$-faces. So $\mu'(T_4)\ge -4+\frac{20}{7}+\frac{12}{7}>0$. If $|V(T_4)\cap V(D)|=1$, say $v_1$ is on $D$, then by Lemma~\ref{4444}, at least one of $v_2$, $v_3$, and $v_4$ is a $5^+$-vertex. If at least two of $v_2$, $v_3$, and $v_4$ are $5^+$-vertices, then by (R1) and (R4), $T_4$ gets at least $\frac{3}{7}\cdot4+\frac{3}{4}\cdot2$ from incident $5^+$-vertices and adjacent $7^+$-faces. Therefore, $\mu'(T_4)\ge -4+\frac{10}{7}+\frac{12}{7} + \frac{3}{2} >0$. If exactly one of $v_2$, $v_3$, and $v_4$ is $5^+$-vertex, then by symmetry, either $v_2$ or $v_3$ is a $5^+$-vertex. So by (R1) and (R4), $T_4$ gets at least $\min\{\frac{3}{7}+\frac{6}{7}+\frac{9}{14}\cdot2, \frac{9}{14}\cdot4\} = \frac{18}{7}$ from adjacent $7^+$-faces and at least $\frac{3}{4}$ from incident $5$-vertices. Note that $T_4$ gets $\frac{5}{7}\cdot2$ from $D$ by (R5). Therefore, $\mu'(T_4)\ge -4+\frac{10 }{7}+ \frac{3}{4} + \frac{18}{7}>0$.

So we may assume that $V(T_4)\cap V(D)=\emptyset$.
By Lemma~\ref{4444}, at most two vertices in $N(v)$ are $4$-vertices. If none of the vertices in $N(v)$ is $4$-vertex, then $\mu'(T_4)\ge-4+\frac{3}{7}\cdot4+\frac{3}{4}\cdot4>0$. If exactly one vertex of $N(v)$ is $4$-vertex, then $\mu'(T_4)\ge-4+\frac{9}{14}\cdot2+\frac{3}{7}\cdot2+\frac{3}{4}\cdot3>0$. If two vertices of $N(v)$ are $4$-vertices, then by Lemma~\ref{reduce} the two $4$-vertices must be adjacent. So $\mu'(T_4)\ge-4+\frac{9}{14}\cdot2+\frac{6}{7}+\frac{3}{7}+\frac{3}{4}\cdot2>0$.

\medskip

Finally, we check the final charge of $D$ and show that $\mu^*(D)>0$. Let $f_3$ be the number of $3$-faces sharing vertices with $D$ and let $b$ be the charge that $D$ gets from other faces by (R7). Let $E(D, V(G)-D)$ be the set of edges between $D$ and $V(G)-D$ and let $s$ be its size.  Let $s'$ be the number of edges in $E(D, V(G)-D)$ that are not on any $3$-faces and $f_3'$ be the number of $T_i$-subgraphs in which every 3-face intersects with $D$. 

\begin{lemma}\label{46D}
The following statements are true about $D$.

(a) Let $T_i$ be a subgraph of $G$ such that each $3$-face in $T_i$ intersects $D$. Then $i\in[3]$ and $T_i$ shares at most two edges with $D$.

(b) Let $f$ be a $5^+$-face that shares $k$ edges with $D$. If $d(f)=5$, then $f$ gives at least $\frac{1}{6}k$ to $D$; if $d(f)=6$, then $f$ gives at least $\frac{1}{3}k$ to $D$; if $d(f)\ge7$, then $f$ gives at least $\frac{3}{7}k$ to $D$.

(c) $b\ge\frac{1}{3}(d(D)-3f_3'-s')$.
\end{lemma}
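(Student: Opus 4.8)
The plan is to prove the three statements in turn, each resting on the discharging rules (R1)--(R6) and the structural lemmas already available for the minimal counterexample.

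\textbf{Part (a).} Since $G\in\mathcal G_2$, Lemma~\ref{property2}(a) gives that $T_i$-subgraphs occur only for $i\le4$ and that every $T_4$ is a copy of $W_4$. I would first exclude $i=4$: with $T_4$ having centre $v$ and rim $v_1v_2v_3v_4$, the vertex $v$ is incident only to $3$-faces, so $v\notin V(D)$ and each $3$-face of $T_4$ can meet $D$ only through a rim vertex. If three rim vertices lay on $D$, then two non-adjacent ones (in the $4$-cycle $v_1v_2v_3v_4$) would share the common neighbour $v\in\mathrm{int}(D)$, contradicting Lemma~\ref{chordless}; if at most two did, they would be adjacent on the rim and the $3$-face through the other two rim vertices and $v$ would miss $D$. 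Hence $i\le3$. For the edge bound, $D$ has no chord, so the edges of $T_i$ on $D$ form a subpath of $D$, and a short check (three such edges would create a chord of $D$, force $d(D)=3$, or force $V(G)=V(D)$, all impossible here) shows this path has at most two edges.

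\textbf{Part (b).} This is a per-edge retention computation. For a $6$-face $f$ sharing $k$ edges with $D$, rule (R3) sends at most $\tfrac13$ across each of its $d(f)-k$ edges off $D$, so $f$ retains at least $2-\tfrac13(6-k)=\tfrac{k}{3}$, which goes to $D$ by (R6). For $d(f)\ge7$, the Remark together with chordlessness of $D$ (exactly as in Lemma~\ref{lemma:D1}(d)) lets one treat $f$ as sending at most $\tfrac37$ across each edge off $D$ and nothing across boundary edges, so $f$ retains at least $d(f)-4-\tfrac37(d(f)-k)\ge\tfrac{3k}{7}$. For a $5$-face $f$ with $k\ge1$, I would reuse the point from the verification that $\mu'(f)\ge0$: by Lemma~\ref{corollary} the only way $f$ is adjacent to two internal $(4,4,4)$-faces is through two non-adjacent edges, all of whose endpoints and the vertex between them lie off $D$, which forces $k=0$; hence for $k\ge1$ the face $f$ is adjacent to at most one internal $(4,4,4)$-face, so it sends out at most $\tfrac13+\tfrac16(4-k)=\tfrac{6-k}{6}$ and retains at least $\tfrac{k}{6}$. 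The case $k=0$ is immediate in all three.

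\textbf{Part (c).} List the faces $f^1,f^2,\dots$ incident to $D$ cyclically, with $f^j$ sharing $k_j\ge1$ edges with $D$, so $\sum_j k_j=d(D)$. By part (b), each incident $5^+$-face gives at least $\tfrac13 k_j$ to $D$ when $d(f^j)\ge6$ (in fact $\tfrac37 k_j$ when $d(f^j)\ge7$) and at least $\tfrac16 k_j$ when $d(f^j)=5$. The aim is to show that the total boundary length carried by $\le4$-faces, together with the shortfall $\tfrac16 k_j$ of each incident $5$-face, is absorbed by the corrections $3f_3'$ and $s'$: an edge of $D$ on a $3$-face lies on a non-internal $3$-face, and by part (a), Lemma~\ref{chordless}, Lemma~\ref{4444} and the $\mathcal G_2$-structure of Lemma~\ref{property2}, these edges group into the $T_i$-subgraphs counted by $f_3'$ (with the edges coming from $T_i$-subgraphs that are only partially on $D$ charged against the internal part of those subgraphs), which yields the $3f_3'$ term; an edge of $D$ on a $4$-face forces, by chordlessness and the absence of $4$-cycles adjacent to $6$-cycles, extra $E(D,V(G)-D)$-edges incident to that $4$-face that lie on no $3$-face, hence are counted by $s'$; and likewise an incident $5$-face supplies $E(D,V(G)-D)$-edges — the edges of $f$ leaving $D$ at the ends of a boundary edge — which when off every $3$-face are counted by $s'$ and cover its $\tfrac16 k_j$ shortfall. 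Summing the estimates of part (b) over all incident $5^+$-faces then gives $b\ge\tfrac13(d(D)-3f_3'-s')$.

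The delicate step is (c): the count must attribute every edge of $D$ and every $E(D,V(G)-D)$-edge exactly once, so that the corrections $3f_3'$ and $s'$ are never spent twice by two faces meeting the same stretch of $D$. The awkward configurations are $T_i$-subgraphs that meet $D$ in an edge yet contain an internal $3$-face (and so are not among the $f_3'$), and consecutive $4$- or $5$-faces along $D$ whose inward side edges coincide; combining Lemma~\ref{chordless}, Lemma~\ref{4444} and Lemma~\ref{property2} to control these cases is the heart of the argument.
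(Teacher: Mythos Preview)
Your treatments of (a) and (b) are essentially the paper's: both rely on Lemma~\ref{chordless} for (a) and on a per-edge accounting from (R2)--(R4) for (b).

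For (c), however, you take a genuinely different route from the paper, and the route you choose is the harder one. The paper does not sum over faces at all. Instead it counts $2$-vertices on $D$: a vertex of $D$ has degree at least $3$ only if it is an endpoint of some edge in $E(D,V(G)-D)$, and by part (a) each of the $f_3'$ subgraphs $T_i$ covers at most three vertices of $D$ while each of the $s'$ edges covers one, so $D$ has at least $d(D)-3f_3'-s'$ vertices of degree two. By Lemma~\ref{chordless} any $2$-vertex lies on a $5^+$-face $f\ne D$; an isolated $2$-vertex forces $f$ to share two edges with $D$ (so $f$ gives at least $\tfrac13$ by (b)), and a run of adjacent $2$-vertices forces $f$ to be a $6^+$-face (so $f$ gives at least $\tfrac13$ per shared edge). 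Summing, $b\ge\tfrac13$ per $2$-vertex, which is exactly the claimed bound. No bookkeeping of which $E(D,V(G)-D)$-edges lie on which faces is ever needed.

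Your face-by-face approach, by contrast, must charge the shortfall of every $4$- and $5$-face along $D$ against $s'$ or $f_3'$, and the difficulty you flag at the end is real and unresolved. Concretely, a $5$-face $f$ sharing one edge $v_1v_2$ with $D$ may have both of its outward edges $v_1v_5$ and $v_2v_3$ lying on $3$-faces; then neither contributes to $s'$, and nothing in your sketch pays for the $\tfrac16$ shortfall of $f$. The analogous issue for $4$-faces is worse. You could try to route these shortfalls through $f_3'$, but the $3$-faces absorbing $v_1v_5$ and $v_2v_3$ need not belong to $T_i$-subgraphs all of whose $3$-faces meet $D$, so they need not be counted by $f_3'$ either. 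Switching to the $2$-vertex count eliminates this entire layer of case analysis.
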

\begin{proof}
(a) If $i=4$, or $i\in[3]$ and $T_i$ shares at least three edges with $D$, then $D$ either has a chord or two non-adjacent vertices that have common neighbors in $int(D)$, contrary to Lemma~\ref{chordless} in either case.


(b) Let $f$ be a $5$-face. Then $f$ is adjacent to at most one internal $(4,4,4)$-face.  So by (R2), it requires to give out at most $\frac{1}{6}$ to $4^-$-faces other than the $(4,4,4)$-face, so it gives at least $\frac{1}{6}k$ to $D$ by (R6). By (R3) and (R6), when $f$ is a $6$-face it gives at least $\frac{1}{3}k$ to $D$. When $f$ is a $7^+$-face, by (R4), (R6) and the Remark, it gives at least $\frac{3}{7}k$ to $D$.  

(c) By (a), $D$ contains at least $(d(D)-3f_3'-s')$ vertices of degree two. By Lemma~\ref{chordless}, each $2$-vertex on $D$ is on a $5^+$-face. when a $2$-vertex is isolated, that is, it is not adjacent to another $2$-vertex, it must be on a $5^+$-face and by (b), $D$ gets at least $\frac{1}{3}$ from the $5^+$-face through the two edges incident to the $2$-vertex;  when a $2$-vertex is not isolated, it must be on a $6^+$-face,  so by (b), $D$ gets at least $\frac{1}{3}$ through each edge incident to the $2$-vertices. So $D$ gets at least $\frac{1}{3}(d(D)-3f_3'-s')$ from adjacent $5^+$-faces, that is $b\ge\frac{1}{3}(d(D)-3f_3'-s')$.
\end{proof}

Note that $s=s'+f_3+f_3'$. Then by (R6) and (R7) and Lemma~\ref{46D}(c),
\begin{align}
\mu^*(D)&=d(D)+4+\sum_{v\in D} (d(v)-4)-\frac{5}{7}f_3+b\\
&=d(D)+4+\sum_{v\in D} (d(v)-2)-2d(D)-\frac{5}{7}f_3+b\\
&=4-d(D)+s-\frac{5}{7}f_3+b\\
&\ge4-d(D)+s'+f_3'+f_3-\frac{5}{7}f_3+\frac{1}{3}(d(D)-3f_3'-s')\\
&\ge4-\frac{2}{3}d(D)+\frac{2}{3}s'+\frac{2}{7}f_3
\end{align}
Note that $s'+f_3\ge1$ since $G\ne D$. So $\mu^*(D)\le0$ only if $d(D)\in\{7,8\}$.
\begin{itemize}
\item $d(D)=7$. Then $\mu^*(D)\le0$ only if $\frac{2}{3}s'+\frac{2}{7}f_3\le\frac{2}{3}$. So $f_3=0,s'=1$ or $f_3\in[2],s'=0$. If $f_3=0$, then $D$ shares at least $7$ edges with a $9^+$-face. By Lemma~\ref{46D}(b), $b\ge\frac{3}{7}\cdot7=3$. So $\mu^*(D)\ge-3+1+3>0$. If $f_3=1$, then $D$ shares at least $6$ edges with a $7^+$-face. By Lemma~\ref{46D}(b), $\mu^*(D)\ge-3+1+\frac{2}{7}+\frac{18}{7}>0$. If $f_3=2$, then either $f_3'=1$ and $D$ shares at least $5$ edges with a $7^+$-face or $f_3'=2$ and $D$ shares at least $5$ edges with $5^+$-faces. In either case, by Lemma~\ref{46D}(b), $\mu^*(D)\ge-3+\frac{4}{7}+\min\{1+\frac{15}{7},2+\frac{5}{6}\}>0$.

\item $d(D)=8$. Then $\mu^*(D)\le0$ only if $\frac{2}{3}s'+\frac{2}{7}f_3\le\frac{4}{3}$. So $f_3\le4$ and $s'\le2$. If $s'=2$, then $f_3=0$, thus $D$ shares at least $5$ edges with a $7^+$-face. By Lemma~\ref{46D}(b), $\mu^*(D)\ge-4+2+\frac{15}{7}>0$. So we may assume that $s'\le1$. If $f_3=0$, then $s'=1$ and $D$ shares $8$ edges with a $10^+$-face, so by Lemma~\ref{46D}(b), $\mu^*(D)\ge-4+1+\frac{24}{7}>0$. If $f_3=1$, then $D$ shares at least $7$ edges with a $7^+$-face when $s'=0$, and $D$ either shares at least $5$ edges with a $7^+$-face or shares at least $7$ edges with $6^+$-faces when $s'=1$. In any case, by Lemma~\ref{46D}(b), $\mu^*(D)\ge-4+1+\frac{2}{7}+\min\{\frac{3}{7}\cdot7, 1+\frac{15}{7},1+\frac{7}{3}\}>0$. If $f_3=2$, then $f_3'=1$ or $f_3'=2$. In the former case, $D$ shares at least $6$ edges with $7^+$-faces. So $\mu^*(D)\ge-4+1+\frac{4}{7}+\frac{18}{7}>0$. In the latter case, $D$ either shares at least $4$ edges with a $7^+$-face or shares at least $6$ edges with $6^+$-faces when $s'=0$, and $D$ shares at least $5$ edges with $5^+$-faces when $s'=1$. In any case, $\mu^*(D)\ge-4+2+\frac{4}{7}+\min\{\frac{12}{7},\frac{1}{3}\cdot6,\frac{5}{6}+1\}>0$. If $f_3=3$, then $s'=0$. So $D$ has at least two $2$-vertices when $f_3'=3$, and $D$ shares at least $5$ edges with $7^+$-faces when $f_3'=2$, and $D$ shares at least $6$ edges with $7^+$-faces when $f_3'=1$. In any case, $\mu^*(f)\ge-4+\frac{6}{7}+\min\{3+\frac{2}{3}, 2+\frac{15}{7}, 1+\frac{18}{7}\}>0$. If $f_3=4$, then $s'=0$ and $f_3'=2$. Note that $D$ shares at least $4$ edges with $7^+$-faces. So $\mu^*(D)\ge-4+2+\frac{8}{7}+\frac{12}{7}>0$.
\end{itemize}

This completes the proof for Theorem \ref{strong2}.

\begin{proof}[{\bf Proof of Theorem~\ref{main2} using Theorem~\ref{strong2}: }]
We may assume that $G$ contains a $3$-cycle $C$, for otherwise, $G$ is DP-$4$-colorable by Theorem~\ref{thm:no-3456-DP-coloring}. By Theorem~\ref{strong2}, any pre-coloring of $C$ can be extended to $G$, so $G$ is also DP-$4$-colorable.
\end{proof}


\begin{thebibliography}{9}


\bibitem{BORODIN}
O. Borodin, Coloring of plane graphs: a survey, Discrete Math. 313 (2013), 517--539.

\bibitem{DP17}
Z. Dvo\v{r}\'{a}k, L. Postle, Correspondence coloring and its application to list-coloring planar graphs without cycles of length $4$ to $8$, {\em J. Combin. Theory} Ser. B 129 (2018), 38--54.


\bibitem{ERT}
P. Erd\H{o}s, A.L. Rubin, H. Taylor, Choosability in graphs, Proc. West Coast Conf. on Combinatorics, Graph Theory, and Computing, Congressus Numerantium XXVI (1979), 125--157.


\bibitem{FJMS}
G. Fijavz, M. Juvan, B. Mohar, and R. Skrekovski, Planar graphs without cycles of specific lengths, Eur. J. Comb. 23(4) (2002), 377--388.

\bibitem{KO17}
S. J. Kim, K. Ozeki, A sufficient condition for DP-4-colorability, https://arxiv.org/abs/1709.09809.

\bibitem{KY17}
S. J. Kim, X. Yu, Planar graphs without $4$-cycles adjacent to triangles are $DP$-4-colorable, https://arxiv.org/abs/1712.08999.

\bibitem{LXL}
P.C.B. Lam, B. Xu, and J. Liu, The 4-choosability of planar graphs without 4-cycles, J. Comb, Theory Ser. B 76(1) (1999), 117--126.


\bibitem{VIZING}
V.G., Vizing, Vertex coloring with given colors (in Russian), Diskret. Analiz. 29 (1976), 3--10.

\bibitem{VOIGT}
M. Voigt, A not 3-choosable planar graph without 3-cycles, Discrete Math. 146 (1995), 325--328.

\bibitem{WL}
W. F. Wang and K.W. Lih, Choosability and edge choosability of planar graphs without five cycles, Appl. Math. Letters 15(5) (2002), 561--565.
\end{thebibliography}
\end{document}